\newcommand{\br}[3]{{$#1$}$\lower4pt\hbox{$\tp\atop\raise4pt \hbox{$\scriptscriptstyle{#2}$}$} ${$#3$}}
\newcommand{\tw}[3]{{$#1$}${\,\scriptscriptstyle {#2}}\atop\raise9pt\hbox{$\scriptstyle\tp$} ${$#3$}}
\newcommand{\ttps}[2]{{#1}\raise5pt\hbox{$\lower12pt\hbox{$\scriptstyle\tp$}\atop \lower0pt\hbox{$\tilde\;$}$}\raise4.5pt\hbox{${\scriptstyle{#2}}$}}
\newcommand{\st}[1]{\mbox{${\,\scriptscriptstyle {#1}}\atop\raise5.5pt\hbox{$*$}$}}
\newcommand{\rd}[1]{\mbox{${\,\scriptscriptstyle {#1}}\atop\raise5.5pt\hbox{$\bullet$}$}}
\newcommand{\rt}[1]{\otimes_\chi}
\newcommand{\lt}[1]{\mbox{${\,\scriptscriptstyle {#1}}\atop\raise5.5pt\hbox{$\ltimes$}$}}
\newcommand{\btr}{\raise1.2pt\hbox{$\scriptstyle\blacktriangleright$}\hspace{2pt}}
\newcommand{\btl}{\raise1.2pt\hbox{$\scriptstyle\blacktriangleleft$}\hspace{2pt}}
\newcommand{\lcr}{\raise1.0pt \hbox{${\scriptstyle\rightharpoonup}$}}
\newcommand{\rcr}{\raise1.0pt \hbox{${\scriptstyle\leftharpoonup}$}}
\newcommand{\ttp}{{\lower12pt\hbox{$\tp$}\atop \hbox{$\tilde\;$}}}
\newcommand{\id}{\mathrm{id}}
\newcommand{\Tc}{\mathcal{T}}
\newcommand{\Fin}{\mathrm{Fin}}
\newcommand{\Ac}{\mathcal{A}}
\newcommand{\Ic}{\mathcal{I}}
\newcommand{\Ru}{\mathcal{R}}
\newcommand{\Kc}{\mathcal{K}}
\newcommand{\Q}{\mathcal{Q}}
\renewcommand{\O}{\mathcal{O}}
\newcommand{\C}{\mathbb{C}}
\newcommand{\Sbb}{\mathbb{S}}
\newcommand{\Qbb}{\mathbb{Q}}
\newcommand{\Hbb}{\mathbb{H}}
\newcommand{\Z}{\mathbb{Z}}
\newcommand{\N}{\mathbb{N}}
\newcommand{\tp}{\otimes}
\newcommand{\zt}{\zeta}
\newcommand{\U}{U}
\newcommand{\ve}{\varepsilon}
\newcommand{\gm}{\gamma}
\newcommand{\dt}{\delta}
\newcommand{\op}{\oplus}
\newcommand{\la}{\lambda}
\newcommand{\tr}{\triangleright}
\newcommand{\tl}{\triangleleft}
\newcommand{\Char}{\mathrm{ch }}
\newcommand{\End}{\mathrm{End}}
\newcommand{\Span}{\mathrm{Span}}
\newcommand{\Hom}{\mathrm{Hom}}
\newcommand{\Tr}{\mathrm{Tr}}
\newcommand{\Rm}{\mathrm{R}}
\newcommand{\Ad}{\mathrm{Ad}}
\newcommand{\La}{\Lambda}
\newcommand{\g}{\mathfrak{g}}
\renewcommand{\b}{\mathfrak{b}}
\renewcommand{\k}{\mathfrak{k}}
\newcommand{\h}{\mathfrak{h}}
\newcommand{\s}{\mathfrak{s}}
\renewcommand{\o}{\mathfrak{o}}
\newcommand{\m}{\mathfrak{m}}
\newcommand{\eps}{\epsilon}
\newcommand{\nn}{\nonumber}
\newcommand{\p}{\mathfrak{p}}
\renewcommand{\l}{\mathfrak{l}}
\newcommand{\si}{\sigma}
\newcommand{\al}{\alpha}
\newcommand{\bt}{\beta}
\newcommand{\be}{\begin{eqnarray}}
\newcommand{\ee}{\end{eqnarray}}
\newtheorem{thm}{Theorem}[section]
\newtheorem{propn}[thm]{Proposition}
\newtheorem{lemma}[thm]{Lemma}
\newtheorem{corollary}[thm]{Corollary}
\newtheorem{remark}[thm]{Remark}
\newcommand{\parag}{\advance\prg by1 {\noindent\bf\thesection.\the\prg\hspace{6pt}}}
\begin{document}
\title{Pseudo-parabolic category \\ over quaternionic projective plane}
\author{
Gareth Jones${}^\dag$ and Andrey Mudrov${}^{\dag,\ddag,\sharp}$\footnote{Corresponding author}
\vspace{20pt}
\\
$\dag$
\small University of Leicester, \\
\small University Road,
LE1 7RH Leicester, UK
\\\\
$\ddag$
\small St. Petersburg Department
 of V.A.Steklov Institute of Mathematics
\\\small
 27 Fontanka, St. Petersburg, 191023, Russia
\\\\
\small
$\sharp$ Moscow Institute of Physics and Technology\\
\small
9 Institutskiy per., Dolgoprudny, Moscow Region,
141701, Russia
\\\vspace{5pt}
\small e-mail: gpj3@le.ac.uk, am405@le.ac.uk
}

\date{ }

\maketitle
\begin{abstract}
Quaternionic  projective plane $\mathbb{H} P^2$ is the next simplest conjugacy  class
of a complex symplectic group  with pseudo-Levi
stabilizer subgroup after  the sphere
$\mathbb{S}^4\simeq \mathbb{H} P^1$.
Its quantization gives rise to a module category $\mathcal{O}_t\bigl(\mathbb{H} P^2\bigr)$ over finite-dimensional representations of
the symplectic quantum group $U_q\bigl(\mathfrak{s}\mathfrak{p}(6)\bigr)$,
a full subcategory
in the BGG category $\mathcal{O}$. We  prove that $\mathcal{O}_t\bigl(\mathbb{H} P^2\bigr)$  is semi-simple
and equivalent to a category of
quantized equivariant vector bundles on $\mathbb{H} P^2$.
\end{abstract}
{\small \underline{Key words}: quaternionic Grassmannians, quantum symplectic group, module category, contravariant form, vector bundles.}
\\
{\small \underline{AMS classification codes}: 17B10, 17B37, 53D55.}
\\

\section{Introduction}
With every point $t$ of a maximal torus $T$ of a simple complex algebraic group $G$ one can associate a full subcategory $\O_t$
in the BGG category $\O$ of  the corresponding quantum group, $U_q(\g)$. This subcategory is additive and
stable under the tensor product with the category  $\Fin_q(\g)$ of finite-dimensional (quasi-classical) $U_q(\g)$-modules.
Its objects are submodules in tensor products of $V\in \Fin_q(\g)$ with a distinguished base module $M$ of highest weight $\la$ depending on $t$.
In generic situation, the locally finite part of $\End(M)$  is
an equivariant quantization $\Ac$ of the coordinate ring of $C_t=\Ad_G(t)$, the conjugacy class of $t$.
If  $\O_t$ is semi-simple, then its objects can be regarded as "representations" of quantum equivariant vector
bundles on $\Ad_G(t)$. According to the famous Serre-Swan theorem  \cite{S,Sw}, global sections of vector bundles on an affine variety
form finitely generated projective modules over its coordinate ring and {\em vice versa}. Finitely generated projective right  $\Ac$-modules
equivariant with respect to $U_q(\g)$ can be viewed as quantum equivariant vector bundles. They constitute a $\Fin_q(\g)$-module
category, $\Pr_q(\Ac,\g)$.

Equivalence of  $\Fin_q(\g)$-module categories $\O_t$ and  $\Pr_q(\Ac,\g)$ is established via functors
acting on objects as $\Pr_q(\Ac,\g)\ni \Gamma \mapsto \Gamma\tp_\Ac M \in \O_t$
and $\O_t\ni N\mapsto \Hom^\circ_\C(M,N)\in \Pr_q(\Ac,\g)$, where the circle designates the locally finite part with respect
to the $U_q(\g)$-action.
The module $M$ is absent in the classical picture as there is no faithful irreducible representation of a classical commutative coordinate ring.

Quantization of vector bundles is a natural extension of the deformation quantization programme for Poisson manifolds \cite{BFFLS}.
Vector bundles on non-commutative spaces are of interest in the K-theory \cite{Sheu}, non-commutative geometry \cite{C}, and non-commutative quantum field theory \cite{DN}.
There is one more area of their applications in connection with quantum symmetric pairs and universal K-matrices,
\cite{Let1,Kolb}.
If the class $C_t$ is a symmetric space, then there is a one-dimensional representation of $\Ac$ (a classical point on
quantized $C_t$).
It satisfies the reflection equation
\cite{KS} defining a coideal subalgebra $U_q(\k')\subset U_q(\g)$. Then
$\Ac$ can be realized as the subalgebra of $U_q(\k')$-invariants in the Hopf algebra of functions on the quantum group that is dual to $U_q(\g)$.
In the classical limit, $U_q(\k')$ turns into the centralizer  $U(\k')$ of a point $t'\in C_t$,
which is conjugate to the centralizer $U(\k)$ of the point $t$.

The representation theory
of $U_q(\k')$ is a challenge since $t'\not \in T$ (which is fixed for a quantum group) and the triangular decomposition  of $U_q(\g)$ is not compatible with that of $U_q(\k')$,
\cite{Let1,Let2}.
The category $\O_t$, if semi-simple,  plays the role of a bridge between $\Pr_q(\Ac,\g)$ and the category of finite-dimensional
$U_q(\k')$-modules via a chain of equivalences. This is discussed in details in \cite{M4} for quantum spheres.

Remark that an associated vector bundle  in the classical geometry is obtained via induction functor from a finite dimensional representation of
the stabilizer subgroup, which is a relatively simple thing. In the non-commutative world the picture is quite opposite. It is surprisingly easier to construct an apparently more complex vector bundle,
and arrive at the fiber via specialization at the (quantum) initial point, if any. This transition is demonstrated  for
projective spaces in \cite{M6}.

In the present paper we study the category $\O_t$ for $G=SP(6)$ and $t\in T$   one of $6$ points with
the stabilizer  $\simeq SP(4)\times SP(4)$ (they belong to two isomorphic conjugacy classes). In this case, $C_t$ is the quaternionic projective plane $\Hbb P^2$
which enters one of the two infinite series,  $\Hbb P^n$, of rank $1$ non-Hermitian symmetric conjugacy classes.
The other series comprises even spheres and has been studied in \cite{M4}.
However, the approach of \cite{M4} (as well of the last section in \cite{M5}) is special for $\Sbb^{2n}$ and cannot be extended any further.
The method we demonstrate here  on the  example of $\Hbb P^2$
works for any semi-simple conjugacy class comprising elements of finite order (e.g. symmetric conjugacy classes).
This method reduces the question of semi-simplicity of $\O_t$ to simplicity of $M$.

We prove that the module $M$ is irreducible in the case of $\Hbb P^2$  and explicitly construct an orthonormal basis with respect
to the contravariant form on it. Our approach is based on viewing $M$ as a module over $U_q(\l)\subset U_q(\g)$,
where  $\l \simeq \g\l(2)\op \s\p(2)$  is the maximal reductive Lie subalgebra in $\k$ such that $U(\l)$ is quantized as a Hopf
subalgebra in $U_q(\g)$. This is the content of Section 2.

In Section 3, we prove semi-simplicity of the category $\O_t$. It is an illustration of
the complete reducibility criterion for  tensor products of highest weight modules
based on  a contravariant form and Zhelobenko extremal cocycle \cite{M3,M5,Zh}.
We show that for every finite-dimensional quasi-classical $U_q(\g)$-module $V$ the tensor product
$V\tp M$ is completely reducible and its simple submodules are in a natural bijection with simple $\k$-submodules in
the classical $\g$-module $V$. This way we establish equivalence of $\O_t$ and $\Fin(\k)$
as Abelian categories.

In Section 4 we present a classical point on quantum  $\Hbb P^2$, i.e. a one-dimensional representation
of $\Ac$. It is a numerical solution of the reflection equation
that satisfies other relations of  quantized $\C[\Hbb P^2]$. Therein we describe  the
coideal subalgebra $U_q(\k')$.

In the last Section 5 we establish equivalence of the category $\O_t$ with the category $\Pr_q(\Ac,\g)$.
\subsection{Quantum group $U_q\bigl(\s\p(6)\bigr)$ and basic conventions}
In this paper, $\g=\s\p(6)$, $\k= \s\p(4)\op \s\p(2)$ and $\l=\g\l(2)\op \s\p(2)$.
There are inclusions $\g\supset \k\supset \l$ of Lie algebras,
which we describe by inclusions of their root bases as follows.
Both $\k$ and $\l$ are reductive subalgebras of maximal rank, i.e. they contain the Cartan subalgebra $\h$
of $\g$. Fix the inner product on $\h$ such that the long  root has length $2$.
All positive roots of $\g$ are expressed in an orthonormal basis of weights $\{\ve_i\}_{i=1}^{3}\in \h^*$ as $\Rm^+_\g=\{\ve_i\pm \ve_j\}_{i<j}\cup \{2\ve_i\}_{i=1}^{3}$.
Then $\al_i=\ve_i-\ve_{i+1}$, $i=1,2$, and $\al_3=2\ve_{3}$ form the basis of simple roots $\Pi_\g=\Pi$.
The basis of simple roots of $\k$ is $\Pi_\k=\{\al_{1}, 2\al_2+\al_3,\al_{3}\}$.
Note that the root $2\al_2+\al_3$ is not in $\Pi_\g$,
so $\k$ is not a Levi subalgebra in $\g$. On the contrary, $\l$ is the maximal subalgebra in $\k$ that is Levi in $\g$.
Its basis of simple roots is $\Pi_\l=\{\al_{1}, \al_{3}\}$.

For two elements $x$, $y$ of an associative algebra and a scalar $a$ we write
$[x,y]_a=xy-ayx$.
We say that $x$ and $y$ quasi-commute  if  $[x,y]_a=0$ for some $a\in \C$, and call the
algebra quasi-commutative if this holds for all pairs of its generators.

The quantum group $U_q(\g)$ is a $\C$-algebra with unit parameterized by a complex number $q$, which
is assumed not a root of unity,
\cite{ChP}. It is
generated by simple root vectors
$e_{i}$, $f_{i}$ (Chevalley generators),
and invertible Cartan generators $q^{h_{i}}$, $i=1,2,3$.
The elements $q^{\pm h_{i}}$ generate a commutative
subalgebra $U_q(\h)$ in $U_q(\g)$ isomorphic to the polynomial algebra on a torus.
They  obey the following commutation
relations with $e_{i}$, $f_{i}$:
$$
q^{h_i} e_{j}= q^{(\al_i,\al_j)} e_{j}q^{h_i}
\quad
q^{h_i} f_{j}= q^{-(\al_i,\al_j)} f_{j}q^{h_i}
\quad i,j=1,2,3.
$$
Furthermore,
$
[e_{i},f_{j}]=\delta_{ij} \frac{q^{h_{i}}-q^{-h_{i}}}{q-q^{-1}}
$ for all $i,j=1,2, 3$.
Non-adjacent positive Chevalley generators commute while  adjacent generators satisfy quantum Serre relations
$$
[e_i,[e_i,e_j]_q]_{\bar q}=0, \quad i,j=1,2, \>i\not =j
,
\quad [e_2,[e_2,[e_2,e_3]_{q^2}]]_{\bar q^2}=0,
\quad
[e_3,[e_3,e_2]_{\bar q^2}]_{q^2}=0,
$$
where $\bar q= q^{-1}$.
Similar relations hold for the negative Chevalley generators on replacement $f_i\to e_i$,
which extends to an involutive algebra
automorphism of $U_q(\g)$ with  $\si(q^{h_i})=q^{-h_i}$.

A comultiplication defined on the generators by
$$\Delta(f_i)= f_i\tp 1+q^{-h_i}\tp f_i,\quad\Delta(q^{\pm h_i})=q^{\pm h_i}\tp q^{\pm h_i},\quad\Delta(e_i)= e_i\tp q^{h_i}+1\tp e_i$$
makes $U_q(\g)$ a Hopf algebra. The assignment $q^{h_i}\mapsto 1$, $e_i\mapsto 0$, $f_i\mapsto 0$ extends to the counit
homomorphism $U_q(\g)\to \C$, then  antipode $\gamma$ acts on the generators by $q^{h_i}\mapsto q^{-h_i}$, $e_i\mapsto -e_iq^{-h_i}$, $f_i\mapsto -q^{h_i}f_i$.
It is an anti-algebra and anti-coalgebra automorphism of $U_q(\g)$.

The composition $\omega=\si\circ \gm$
 is an involutive automorphism of $U_q(\g)$ that preserves comultiplication and flips multiplication.

The Serre relations are homogeneous with respect to  the $U_q(\h)$-grading via its adjoint action on $U_q(\g)$.
They are determined by the corresponding weight, so we refer to a particular relation by its weight in what follows.

We  remind that a total ordering on the set of positive roots is called normal
if any $\al\in \Rm^+$ presentable as a sum $\al=\mu+\nu$ with $\mu,\nu\in \Rm^+$ lies between $\mu$ and $\nu$.
A reductive Lie subalgebra $\l\subset \g$ of maximal rank is called Levi if it has a basis $\Pi_\l$ of
simple roots which is a part of $\Pi$.
Then there is an ordering such that every element of $\Rm^+_{\g/\l}$ is preceding all elements of $\Rm_\l$.
In this paper, $\l$ designates the subalgebra $\g\l(2)\op \s\p(2)$ as agreed upon earlier.


With a normal ordering one can associate a system  $\{\tilde f_\al\}_{\al\in \Ru^+}\subset U_q(\g_-)$ of elements such that  ordered monomials
in  $\tilde f_\al$ form a PBW-like basis in $U_q(\g_-)$. In particular, the algebra $U_q(\g_-)$ is freely generated
 over $U_q(\l_-)$ by ordered monomials in $\tilde f_\al$ with $\al\in \Rm^+_{\g/\l}$.
In the classical limit, the elements $\tilde f_\al$ form a basis of root vectors in $\g_-$.
For a detailed construction of such a basis, the reader is referred to \cite{ChP}.

By $\La_\g$ we denote the root lattice of $\g$, i.e. a free Abelian group generated by  fundamental weights
relative to the fixed polarization of $\Rm$.
The semi-group of integral dominant weights is denoted by  $\La^+_\g$. All $U_q(\g)$-modules are assumed
diagonalizable over $U_q(\h)$.
A non-zero vector $v$ of a $U_q(\h)$-module $V$ is said to be of weight $\mu\in \h^*$ if
$q^{h_\al}v=q^{(\al,\mu)}v$ for all $\al\in \Pi^+$. Vectors of weight $\mu$ span a subspace in $V$ denoted by $V[\mu]$. The set of weights of $V$ is denoted by $\La(V)$.

Infinitesimal character of a $U_q(\h)$-module is defined as a formal sum
$
\sum_{\mu\in \La(V)} \dim V[\mu]_\mu e^{\mu}.
$
We write $\Char(V)\leqslant \Char(W)$ if $\dim V[\mu]\leqslant \dim W[\mu]$ for all $\mu$
and $\Char(V)< \Char(W)$ if this inequality is strict for some $\mu$.

By all $q$ we mean all not a root of unity; almost all $q$ stands for
all except for a finite set of values.

\section{Base module for $\Hbb P^2$}

In this section we study a $U_q(\g)$-module $M$ that generates the category of our interest.
 We prove its irreducibility and construct an orthonormal basis with respect
to a contravariant form on it.

Let $\kappa$ denote the half-sum of the positive roots of $\k$.
Regard roots (more generally, integaral weights) as characters of the maximal torus $T$ of the group $G$ (the torus has been fixed
 and its Lie algebra is  $\h$ participating in the construction of $U_q(\g)$).
Define base weight $\la\in \h^*$ as one featuring the property
$q^{2(\la,\al)}=\al(t)q^{2(\kappa-\rho,\al)}$, for all $\al\in \Pi_\g$,
where $\al(t)$ is the value of  root $\al$ on the initial point $t\in T$. It is the
eigenvalue of the operator $\Ad_t$ on the corresponding root space in $\g$ and, in particular,
$\al(t)=1$ once $\al \in \Pi_\k$.

Remark that  $\la$ is evaluated on  squared Cartan generators in the above equality.
Therefore   base weight is not uniquely determined by the point $t$
but up to a choice of sign in $\pm\sqrt{\al(t)}$
for each $\al \in \Pi_\g$. One can pick up any for $\la$, but we additionally
assume $ q^{(\la,\al)}=1$ for all $\al\in \Pi_\l=\Pi_\g\cap \Pi_\k$.
This is consistent with the conditions on $\la$ because $(\kappa,\al)=(\rho,\al)=1$ for such $\al$.
The rational for this will be explained later.

We fix the initial point $t$ by
$$
\ve_i(t)=
\left\{\begin{array}{ccc}
 -1, & i=1,2,\\
1,& i=3,
\end{array}
\right.
$$
so the base weight satisfies   $q^{2(\la,\ve_{3})}=q^{(\la,\ve_1-\ve_2)}=1$,
$q^{2(\la,\ve_1)}= q^{2(\la,\ve_2)}=-q^{-2}$.

Set $\dt=2\al_2+ \al_{3}$ and $f_{\dt}=f_2^2 f_{3}-(q^2+\bar q^2)f_2f_{3} f_2+ f_{3} f_2^2$.
It is easy to check that $f_\dt$ commutes with $f_{3}$ and $e_{3}$, cf. \cite{M7}.
Let $\hat M_\la$ denote the Verma module with highest weight $\la$ and define $M$ as the quotient of $\hat M_\la$ by its submodule
generated by singular vectors $f_11_\la$, $f_{3}1_\la$, and $f_\dt1_\la$.
It is isomorphic to $U_q(\g_-)/J$ as a $U_q(\g_-)$-module,  where $J\subset U_q(\g_-)$ is the left ideal generated by $f_1$, $f_{3}$, $f_\dt$.

The module $M$ supports quantization of the conjugacy class $\Hbb P^2$ in the sense that
its quantized coordinate ring  $\C_q[\Hbb P^2]$   can be represented as a
$U_q(\g)$-invariant subalgebra in $\End(M)$. Its explicit formulation in terms of generators and relations
is given in Section \ref{SecRE}.

As $\l$ is a Levi subalgebra in $\g$, its universal enveloping algebra is quantized to a Hopf subalgebra  $U_q(\l)\subset U_q(\g)$.
The module $M$ is a quotient of the parabolic Verma module of the same weight, by the submodule generated
by (the image of) $f_\dt 1_\la$.
It follows that $M$ is locally finite over $U_q(\l)$, \cite{M5}.
We will study $M$ regarding  it as a $U_q(\l)$-module; then our additional requirements $(\la,\al)=0$ for $\al\in \Pi_\l$  will keep us within the category of quasi-classical $U_q(\l)$-modules (deformations of classical $U(\l)$-modules. Note that  $M$ is not quasi-classical for entire $U_q(\g)$.

\begin{remark}
\label{ref_sphere}
Note that  $M$ contains a base module for the quantum $4$-sphere,
\cite{M7}. It is  generated by the highest vector, over the natural quantum subgroup $U_q\bigl(\s\p(4)\bigr)\simeq U_q\bigl(\s\o(5)\bigr)$
in $U_q\bigl(\s\p(6)\bigr)$. We have used this fact when constructing the singular vectors $f_\dt1_\la$ and $f_31_\la$ in the Verma module $\hat M_\la$. We will further
refer to results on $\Sbb^4$ in our study of  higher pseudo-parabolic modules over  $\Hbb P^2$
in Section \ref{SecPPM}.
\end{remark}

\subsection{$U_q(\l)$-module structure of $M$}
It turns out that highest vectors of finite dimensional $U_q(\l)$-submodules in $M$
belong to a subalgebra $\simeq U_q\bigl(\s\l(3)\bigr)\subset U_q(\g)$, which we describe next.

Set $\xi=\al_1+\al_2+\al_3$ and $\theta=\al_1+2\al_2+\al_3$ and define root vectors
$$ f_\xi=[[f_1,f_2]_{\bar q},f_{3}]_{\bar q^2},\quad f_\theta=[f_2,f_\xi]_{q}
,\quad
 e_\xi=[e_{3},[e_2,e_1]_q]_{q^2}, \quad e_\theta=[e_\xi,e_2]_{\bar q}.
$$
Remark that $e_{\phi}$ is proportional to $\si(f_\phi)$ for $\phi=\xi,\theta$.
The set $\{f_\xi, e_\xi, q^{\pm h_\xi}\}$    forms a quantum $\s\l(2)$-triple with
$
[e_\xi,f_\xi]=[2]_q[h_\xi]_q
$.

\begin{propn}
  The elements $e_2,f_2,q^{\pm h_2}, e_\xi,f_\xi, q^{\pm h_\xi}$ generate
  a subalgebra $U_q(\m)\subset U_q(\g)$ isomorphic to $U_q\bigl(\s\l(3)\bigr)$, with the
  set of simple roots $\{\al_2, \xi\}$.
\label{RAsl3}
\end{propn}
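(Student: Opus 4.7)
The plan is to verify that $e_2,f_2,q^{\pm h_2},e_\xi,f_\xi,q^{\pm h_\xi}$ satisfy the Chevalley--Serre presentation of $U_q\bigl(\s\l(3)\bigr)$ with simple roots $\Pi_\m=\{\al_2,\xi\}$, and then to deduce the stated isomorphism from a PBW-based injectivity argument. From $\al_1=\ve_1-\ve_2$, $\al_2=\ve_2-\ve_3$, $\al_3=2\ve_3$ one computes $\xi=\ve_1+\ve_3$, so $(\al_2,\al_2)=(\xi,\xi)=2$ and $(\al_2,\xi)=-1$; this is the $A_2$ Cartan matrix with two short simple roots. Setting $h_\xi:=h_1+h_2+h_3$, the iterated $q$-bracket definitions make $e_\xi$ and $f_\xi$ into $U_q(\h)$-weight vectors of weights $\pm\xi$, which at once yields $q^{h_i}e_\xi q^{-h_i}=q^{(\al_i,\xi)}e_\xi$ and the analogous relations for $f_\xi$; their specialization to $q^{\pm h_2},q^{\pm h_\xi}$ supplies the Cartan part of the target presentation.

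I would next establish the off-diagonal Chevalley relations $[e_2,f_\xi]=0$, $[e_\xi,f_2]=0$ and the diagonal one $[e_\xi,f_\xi]=[2]_q[h_\xi]_q$. The former two are classically trivial because $\pm(\xi-\al_2)=\pm(\al_1+\al_3)$ is not a root of $\g$; in the quantum setting one expands $f_\xi=[[f_1,f_2]_{\bar q},f_3]_{\bar q^2}$, commutes $e_2$ through using only $[e_2,f_1]=[e_2,f_3]=0$, $[e_2,f_2]=[h_2]_q$ and $[f_1,f_3]=0$, and checks that the resulting $q$-shifted terms cancel by the $q$-Jacobi identity; the involution $\sigma$ then exchanges $[e_2,f_\xi]$ with $[e_\xi,f_2]$. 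For the diagonal relation I would commute $e_\xi$ stepwise past $f_\xi$ using $[e_i,f_j]=\delta_{ij}[h_i]_q$ together with the toral relations, and show that the telescoping sum produced by the double commutator collapses to $[2]_q[h_\xi]_q$, the factor $[2]_q$ reflecting the long/short-root mixing encoded in the $q^2$-brackets appearing in the definitions.

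The remaining $A_2$ quantum Serre relations for $(\al_2,\xi)$, namely $[e_2,[e_2,e_\xi]_q]_{\bar q}=0=[e_\xi,[e_\xi,e_2]_q]_{\bar q}$ together with their $f$-analogues (obtained via $\sigma$), would then be verified by expanding via the $q$-Jacobi identity into monomials in $e_1,e_2,e_3$ and reducing to the Serre relations of $U_q(\g)$ listed in the preceding subsection, combined with commutativity of non-adjacent Chevalley generators. The foregoing supplies a surjective algebra homomorphism $U_q\bigl(\s\l(3)\bigr)\twoheadrightarrow U_q(\m)\subset U_q(\g)$; injectivity is obtained from the PBW theorem for $U_q(\g)$, applied to a normal ordering of $\Rm^+_\g$ in which $\al_2$ and $\xi$ are consecutive, so that the associated Lusztig root vectors coincide up to nonzero scalars with $f_2,f_\xi$ (and, via $\omega$, with $e_2,e_\xi$), making the PBW monomials in the candidate $U_q(\m_\pm)$ linearly independent inside $U_q(\g_\pm)$.

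The main obstacle is the diagonal identity $[e_\xi,f_\xi]=[2]_q[h_\xi]_q$: the coefficient $[2]_q$ emerges only after a precise cancellation of several $q$-shifted contributions sensitive to the interaction of the long root $\al_3$ with the short roots $\al_1,\al_2$ inside the definitions of $e_\xi$ and $f_\xi$. The Serre verifications present an analogous but milder bookkeeping burden, while the Cartan relations and the PBW injectivity step are essentially formal.
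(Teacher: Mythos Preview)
Your outline matches the paper's proof: verify the $A_2$ Cartan data for $\{\al_2,\xi\}$, the mixed relations $[e_2,f_\xi]=0=[e_\xi,f_2]$, and the two Serre relations, then transport to the $e$-side via $\si$. Two points are worth flagging.

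Your difficulty estimate is inverted. You call $[e_\xi,f_\xi]=[2]_q[h_\xi]_q$ the main obstacle and the Serre relations a ``milder bookkeeping burden''; the paper does the opposite, asserting the diagonal relation without proof just before the proposition and devoting essentially the entire Appendix to the Serre identities $[f_2,f_\theta]_{\bar q}=0$ and $[f_\xi,f_\theta]_q=0$. The first (Proposition~\ref{Ap-Serre_step}) is not a brute expansion into monomials: via repeated use of the modified Jacobi identity~(\ref{Jacobi}) the paper derives two linear relations among $f_2 f_\theta,\ f_\theta f_2,\ f_2\bar f_\theta,\ \bar f_\theta f_2$ and solves the resulting $2\times 2$ system, which is where the hypothesis $[3]_q\neq 0$ enters. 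The second (Proposition~\ref{Ap-xi-theta}) is reduced, through $f_\xi=[f_\nu,f_3]_{\bar q^2}$ with $f_\nu=[f_1,f_2]_{\bar q}$, to $[f_3,f_\theta]=0$ (Lemma~\ref{Ap-theta-beta}, itself an instance of the abstract cancellation Lemma~\ref{Ap-Great Auxiliary}) together with $[f_\nu,f_\theta]_{\bar q}=0$ (Lemma~\ref{Ap-nu-theta}). Your plan to ``expand into monomials in $e_1,e_2,e_3$ and reduce to the Serre relations of $U_q(\g)$'' is viable in principle but considerably heavier than this route; the modified Jacobi machinery is precisely what keeps the computation manageable.

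On injectivity: $\al_2$ and $\xi$ cannot be \emph{consecutive} in any normal ordering, since $\theta=\al_2+\xi$ must sit between them. What you actually need is that $\{\xi,\theta,\al_2\}$ form a consecutive convex block, and such orderings do exist (for instance $\al_1,\ \al_1{+}\al_2,\ 2\ve_1,\ \xi,\ \theta,\ \al_2,\ \dt,\ \al_2{+}\al_3,\ \al_3$); one must then still check that the PBW root vectors attached to that ordering agree with the paper's $f_\xi,f_\theta$ up to nonzero scalars. The paper does not address injectivity at all, so once repaired this step is a genuine addition to the argument.
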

\begin{proof}
Observe that the set $\Rm_\m=\{\pm \al_2,\pm \xi, \pm \theta\}\subset \h^*$ is a root system of the $\s\l(3)$-type with
$$
(\xi,\xi)=2, \quad (\al_2,\al_2)=2, \quad (\xi,\al_2)=-1,
$$
so the commutation relations between the Cartan and simple root generators are correct.
Furthermore, it is straightforward to check that $[e_2,f_\xi]=0$ and $[e_\xi,f_2]=0$.
Finally, so long $f_\theta=[f_2,f_\xi]_q$, the Serre relations $[f_\theta,f_2]_{q}=0=[f_\xi,f_\theta]_{q}$ hold by
(\ref{Ap-Serre_step}) and  (\ref{Ap-xi-theta}). This also yields the Serre relations $[e_\theta,e_2]_{q}=0=[e_\xi,e_\theta]_{q}$ via the
involution $\si$.
\end{proof}
\noindent
Remark that the subalgebra
$U_q(\m)$
results from a Lusztig transformation of the subalgebra with the simple root basis $\al_1,\al_2$, see Appendix.

\begin{propn}
\label{l-singular}
  Vectors $\{f_2^k f_\theta^l 1_\la\}_{k,l\in \Z_+}\subset M$ are $U_q(\l)$-singular (killed by all $e_\al$ with $\al\in \Pi_\l$).
\end{propn}
\begin{proof}
Both $e_{1}$ and $e_{3}$ commute with $f_2$, so we check their interaction with $f_\theta$.
An easy calculation gives $[e_3,f_\theta]=0$ and $[e_1,f_\theta]=f_\dt q^{h_{1}}\in J U_q(\h)$.
Hence $f_2^kf_\theta^l 1_\la$ is annihilated by  $e_{1}$ and $e_3$, by virtue of (\ref{Ap-theta delta}).
\end{proof}
\begin{corollary}
  The vector $f_\theta$ belongs to the normalizer of the left ideal $J$.
\label{theta-normalizer}
\end{corollary}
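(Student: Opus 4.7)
The corollary is equivalent to $Jf_\theta\subseteq J$. Since $J$ is generated as a left ideal by $\{f_1,f_3,f_\dt\}$, it suffices to show $f_i f_\theta\in J$ for each of these generators, or equivalently that $f_i f_\theta\cdot 1_\la=0$ in $M$ for $i\in\{1,3,\dt\}$. My plan is to handle the cases $i\in\{1,3\}$ via Proposition~\ref{l-singular} and the local $U_q(\l)$-finiteness of $M$, and to dispatch the case $i=\dt$ by a direct $q$-commutation argument in $U_q(\g_-)$.

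For $i\in\{1,3\}$: Proposition~\ref{l-singular} with $k=0$, $l=1$ shows that $f_\theta 1_\la$ is $U_q(\l_+)$-singular. A direct check gives $(\theta,\al_1)=(\theta,\al_3)=0$, so the $h_j$-eigenvalues of $f_\theta 1_\la$ agree with those of $1_\la$ for $j=1,3$: namely $q^{h_1}(f_\theta 1_\la)=\pm f_\theta 1_\la$ (using $q^{2(\la,\al_1)}=1$) and $q^{h_3}(f_\theta 1_\la)=f_\theta 1_\la$ (using $(\la,\al_3)=0$). Since $M$ is locally $U_q(\l)$-finite and $U_q(\l)\cong U_q(\g\l(2))\otimes U_q(\s\p(2))$, the vector $f_\theta 1_\la$ generates a finite-dimensional simple $U_q(\l)$-submodule. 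Because its $q^{h_j}$-eigenvalues are $\pm 1$, this submodule must be one-dimensional (the tensor product of trivial or sign-twisted one-dimensional modules of the simple-root $U_q(\s\l(2))$-factors), on which each $f_j$ acts as zero. Hence $f_1 f_\theta 1_\la=0$ and $f_3 f_\theta 1_\la=0$ in $M$.

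For $i=\dt$: the element $f_\dt$ does not lie in $U_q(\l_-)$, so the preceding argument does not apply. Instead, observe that $\dt+\theta=\ve_1+3\ve_2$ is not a root of $\g$ and admits no nontrivial decomposition $\dt+\theta=\gm_1+\gm_2$ into positive roots of $\g$ with $\{\gm_1,\gm_2\}\neq\{\dt,\theta\}$. By the Levendorskii--Soibelman formula for quantum root vectors in a suitable normal ordering, this rules out Serre-type lower-order corrections and yields the clean $q$-commutation $f_\dt f_\theta=q^{-(\dt,\theta)}f_\theta f_\dt=q^{-4}f_\theta f_\dt$. Consequently $f_\dt f_\theta\in U_q(\g_-)f_\dt\subseteq J$.

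The main obstacle is justifying the pure $q$-commutation of $f_\dt$ and $f_\theta$. While the classical analogue is immediate from the root system, at the $q$-deformed level one must ensure no Serre correction appears; this demands either a careful PBW bookkeeping or a direct computation using the defining expressions for $f_\dt$ and $f_\theta$ together with the Serre relations of $U_q(\g)$.
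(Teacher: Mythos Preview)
Your treatment of the generators $f_1$ and $f_3$ matches the paper's proof: the paper likewise observes that $f_\theta 1_\la$ generates a finite-dimensional $U_q(\l)$-submodule, notes that $(\la-\theta,\al_i)=0$ for $i=1,3$, and concludes that this submodule is trivial, so $f_1 f_\theta,\,f_3 f_\theta\in J$.

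For the generator $f_\dt$ your route differs from the paper's, and there is a genuine gap. First, a small arithmetic slip: $(\dt,\theta)=(2\ve_2,\ve_1+\ve_2)=2$, not $4$, so the expected relation is $f_\dt f_\theta=q^{-2}f_\theta f_\dt$. More substantively, invoking the Levendorskii--Soibelman relations presupposes that the paper's specific elements $f_\dt$ and $f_\theta$ coincide (up to scalars) with the PBW root vectors attached to some convex order, and that \emph{all} ordered monomials in the intermediate root vectors of total weight $\dt+\theta$ vanish; checking only decompositions into two roots is not sufficient (e.g.\ $\dt+\theta=\al_1+2\dt$ is a three-term decomposition), and you have not fixed and analysed a particular normal ordering. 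You rightly flag this yourself as ``the main obstacle''. The paper sidesteps the issue entirely: it proves the exact commutation $f_\dt f_\theta=\bar q^{2}f_\theta f_\dt$ as Corollary~\ref{Ap-theta delta} in the appendix, obtained in one line from the relations $[f_3,f_\theta]=0$ (Lemma~\ref{Ap-theta-beta}) and $f_2 f_\theta=\bar q\,f_\theta f_2$ (Proposition~\ref{Ap-Serre_step}) applied to the defining expression $f_\dt=f_2^2f_3-(q^2+\bar q^2)f_2f_3f_2+f_3f_2^2$. That direct computation is precisely the alternative you mention in your last paragraph, and it is what actually closes the argument.
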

\begin{proof}
  Indeed, $ f_\dt f_\theta\in J$ by (\ref{Ap-theta delta}). Furthermore, $f_\theta1_\la$ generates
  a finite-dimensional $U_q(\l)$-submodule in $M$. Since $(\la-\theta,\al_{i})=0$ for $i=1,3$, this submodule is trivial,
  hence $f_{1}f_\theta$ and $f_{3} f_\theta$ are in $J$.
\end{proof}
We denote by $B$ the set  $\{f_2^k f_\theta^l 1_\la\}_{k,l\in \Z_+}\subset M$. Our next objective is to show that
$B$ is a basis of the subspace of $U_q(\l_+)$-invariants in $M$.
Let  $L_{k,l}\subset M$ be the $U_q(\l)$-submodule generated by $f_2^k f_\theta^l 1_\la$
and set $L=\op_{k,l=0}^\infty L_{k,l}\subset M$.

Introduce notation $f_{ij}$ for $i\leqslant j$
by setting $f_{ii}=f_i$ and recursively $f_{i,j+1}=[f_{i,j},f_{i+1}]_a$, where $a=q^{(\al_i+\ldots +\al_j,\al_{j+1})}$.
Then Serre relations imply
\be
f_{1}f_2^{k}=
[k]_qf_2^{k-1} f_{12} + q^{-k} f_2^{k} f_{1},
\quad
f_{3}f_2^{k}=
-q^2[k]_{q^2}f_2^{k-1} f_{23} + q^{2k} f_2^{k} f_{3} \mod J
\label{aux_comm_rel}
\ee
since $f_\dt$ commutes with $f_2$ and $f_3$.
It will be also of use to write these formulas as
\be
f_2^{k-1} f_{12}=\frac{1}{[k]_q}f_{1}f_2^{k}
 \mod J,
\quad
f_2^{k-1} f_{23}=-\frac{1}{q^2[k]_{q^2}}
f_{3}f_2^{k}   \mod J
\label{aux_comm_rel1}.
\ee

\begin{lemma}
For all $k\geqslant 2$,
$
f_{1}f_{3}f_2^{k}=
[k]_{q^2} f_2^{k-2}\Bigl([k]_qf_2f_3 f_1f_2 -\frac{[k-1]_q[2]_q}{(1-\bar q^2)}f_\theta\Bigr) \mod J.
$
\label{fullness}
\end{lemma}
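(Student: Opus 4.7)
The plan is to compute $f_1 f_3 f_2^k$ modulo $J$ by iterating the commutation relations~(\ref{aux_comm_rel}) and using $[f_1,f_3]=0$. Since $f_1$ and $f_3$ are non-adjacent Chevalley generators, $f_1 f_3 f_2^k = f_3 f_1 f_2^k$. Applying the first identity of~(\ref{aux_comm_rel}) and discarding the tail $q^{-k} f_3 f_2^k f_1 \in J$ gives
\[
f_1 f_3 f_2^k \equiv [k]_q\, f_3 f_2^{k-1} f_{12} \pmod{J}.
\]
Next, the second identity of~(\ref{aux_comm_rel}) applied to $f_3 f_2^{k-1}$ splits this into a ``diagonal'' piece $q^{2(k-1)}[k]_q f_2^{k-1} f_3 f_{12}$ and a ``bracket'' piece $-q^2[k]_q[k-1]_{q^2} f_2^{k-2} f_{23} f_{12}$. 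Expanding $f_3 f_{12} = f_3 f_1 f_2 - q^{-1} f_3 f_2 f_1 \equiv f_3 f_1 f_2 \pmod J$ in the diagonal piece already produces a contribution toward the desired $f_2^{k-1} f_3 f_1 f_2$ term.

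For the bracket piece one needs to recognise where $f_\theta$ enters. The key intermediate identity is
\[
f_1 f_{23} \equiv f_\xi \pmod{J},
\]
obtained by expanding $f_{23} = f_2 f_3 - q^{-2} f_3 f_2$, substituting $f_1 f_2 = f_{12} + q^{-1} f_2 f_1$, commuting $f_1$ past $f_3$, and discarding terms ending in $f_1$. Reducing $f_{23} f_{12} \equiv f_{23} f_1 f_2 \pmod{J}$ and commuting $f_1$ past $f_{23}$ then expresses $f_{23} f_{12}$ modulo $J$ as a combination of $f_\xi f_2$ and a residual $f_2 f_3 f_1 f_2$ contribution. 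The defining bracket $f_\theta = [f_2,f_\xi]_q$ yields $f_\xi f_2 = q^{-1}(f_2 f_\xi - f_\theta)$, and reversing $f_\xi \equiv f_1 f_{23}$ folds the $f_2 f_\xi$ piece back into the diagonal $f_2^{k-1} f_3 f_1 f_2$ contribution, leaving a clean multiple of $f_2^{k-2} f_\theta$.

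The main obstacle is coefficient bookkeeping. The prefactor $[k]_{q^2}$ common to both terms emerges only after one correctly sums the $q^{2(k-1)}$ contribution of the diagonal piece with the $[k-1]_{q^2}$ contribution arising from the bracket reduction, via standard $q$-number identities. The factor $[2]_q/(1-\bar q^2)$ multiplying $f_\theta$ traces back to the denominators intrinsic to the brackets $[\,\cdot\,]_{\bar q^2}$ in $f_\xi$ and $[\,\cdot\,]_q$ in $f_\theta$, combined with the $[2]_q$ from the exchange $f_\xi f_2 \leftrightarrow f_2 f_\xi$. A less error-prone alternative is induction on $k \geq 2$: the base case $k=2$ is a short direct calculation that uses $f_\dt = f_2^2 f_3 - [2]_{q^2} f_2 f_3 f_2 + f_3 f_2^2 \in J$ to eliminate $f_3 f_2^2$, and the inductive step follows by right-multiplying the identity for $f_1 f_3 f_2^{k-1}$ by $f_2$; the corollary to Proposition~\ref{l-singular} that $f_\theta$ normalises $J$ ensures the mod-$J$ calculus remains well-defined when commuting $f_2$ past $f_\theta$.
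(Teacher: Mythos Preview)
Your main argument is essentially the paper's: push $f_1$ and $f_3$ through $f_2^{k}$ via the relations~(\ref{aux_comm_rel}) and then reduce the resulting quadratic expressions modulo $J$ to combinations of $f_2 f_\xi$ and $f_\theta$. The only cosmetic difference is the order in which you commute: you move $f_1$ first and then $f_3$, leaving $f_{23}f_{12}$ and $f_3 f_{12}$ to simplify, whereas the paper moves $f_3$ first and then $f_1$, leaving $f_{12}f_{23}$ and $f_2 f_1 f_{23}$. The reductions you sketch (in particular $f_1 f_{23}\equiv f_\xi$ and $f_\xi f_2=q^{-1}(f_2 f_\xi-f_\theta)$) are the mirror images of what the paper uses, and both proofs leave the final coefficient bookkeeping to the reader at the same level of detail.

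One genuine issue with your proposed \emph{alternative} by induction: right-multiplying the mod-$J$ identity for $f_1 f_3 f_2^{\,k-1}$ by $f_2$ is not a legal move, because $J$ is only a \emph{left} ideal in $U_q(\g_-)$. For instance $f_1\in J$ but $f_1 f_2\notin J$, so a congruence $X\equiv Y\pmod J$ does not in general imply $Xf_2\equiv Yf_2\pmod J$. The normalizer property of $f_\theta$ from the corollary to Proposition~\ref{l-singular} addresses only the commutation of $f_2$ with $f_\theta$ on the right-hand side, not the fate of the discarded $J$-terms under right multiplication. To salvage the induction you would have to keep explicit track of which elements of $J$ were dropped at stage $k-1$ and verify individually that each one, multiplied on the right by $f_2$, returns to $J$; this is not automatic and undercuts the claimed economy of the inductive route.
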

\begin{proof}
Pushing $f_{3}$ and then $f_{1}$ to the right in $f_{1}f_{3}f_2^{k}$
 we find it equal to
$$
-q^2f_{1} [k]_{q^2}f_2^{k-1} f_{23}\mod J=
-q^2[k]_{q^2} [k-1]_q f_2^{k-2} f_{12} f_{23}  -q^2q^{-k+1} [k]_{q^2}f_2^{k-1}f_{1} f_{23}\mod J,
$$
where we have used (\ref{aux_comm_rel}).
Expressing  $f_{12} f_{23}$ and $f_2f_{1} f_{23}$ on the right
through $f_2f_\xi$ and $f_\theta$ modulo $J$
we prove the lemma.
\end{proof}

\begin{propn}
  $L$ exhausts all of  $M$.
\end{propn}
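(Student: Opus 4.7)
The plan is to prove $L=M$ by establishing $U_q(\g_-)$-stability of $L$; combined with $1_\la = f_2^0 f_\theta^0 1_\la \in L_{0,0}$ and $M = U_q(\g_-)1_\la$, this forces the reverse inclusion $L\supseteq M$. Because each $L_{l,k}$ is by construction a $U_q(\l)$-submodule, $L$ is already closed under the action of $U_q(\l_-) = \langle f_1, f_3\rangle$. The only non-trivial task is therefore to verify $f_2\cdot L\subseteq L$.

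Write $v_{l,k} = f_2^k f_\theta^l 1_\la$ and represent an arbitrary element of $L_{l,k}$ as $w\, v_{l,k}$ with $w\in U_q(\l_-)$. Using $f_\theta f_2 = q f_2 f_\theta$ (from the proof of Proposition \ref{RAsl3}), the base case $w=1$ gives $f_2 v_{l,k} = v_{l,k+1}\in L_{l,k+1}$. For general $w$, split $f_2 w = wf_2 + [f_2,w]$: the first term sends $v_{l,k}$ to $w\, v_{l,k+1} \in L_{l,k+1}$, while the commutator $[f_2,w]\, v_{l,k}$ must be examined further. Using
\[
f_2 f_1 = q f_1 f_2 - q f_{12}, \qquad f_2 f_3 = q^{-2} f_3 f_2 + f_{23}
\]
(immediate from the definitions of $f_{12}$ and $f_{23}$), one iteratively pushes $f_2$ through $w$; the $f_{12}$- and $f_{23}$-corrections generated along the way are in turn expanded via $f_{12}=[f_1,f_2]_{\bar q}$ and $f_{23}=[f_2,f_3]_{q^{-2}}$ and fed back into the same framework. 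The proof is organised as an induction on the multi-degree of $w$ in $(f_1,f_3)$.

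The main obstacle is that the iteration ultimately involves the vector $f_2^{k-1} f_3 f_1 f_2 1_\la$, whose membership in $L$ is not manifest in any single isotypic component. This is precisely the case resolved by Lemma \ref{fullness}: the identity
\[
f_1 f_3 f_2^{k} 1_\la = [k]_{q^2}[k]_q\, f_2^{k-1} f_3 f_1 f_2\, 1_\la \;-\; [k]_{q^2}\,\tfrac{[k-1]_q[2]_q}{1-\bar q^2}\, f_2^{k-2} f_\theta\, 1_\la
\]
in $M$, together with the obvious memberships $f_1 f_3 f_2^{k} 1_\la \in L_{0,k}$ and $f_2^{k-2} f_\theta\, 1_\la = v_{1,k-2}\in L_{1,k-2}$ (using $f_\theta f_2 = qf_2f_\theta$), forces $f_2^{k-1} f_3 f_1 f_2\, 1_\la \in L_{0,k}+L_{1,k-2}\subseteq L$. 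This ``bridging'' identity between neighbouring summands is the essential algebraic input: without it, the reduction of $[f_2,f_1]f_3$- and $[f_2,f_3]f_1$-type terms would leak out of every individual $L_{l,k}$. With Lemma \ref{fullness} in hand, the induction closes, $f_2\cdot L\subseteq L$ is established, and hence $L=M$.
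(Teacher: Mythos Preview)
Your overall strategy---reducing $L=M$ to $f_2$-stability of $L$ and invoking Lemma~\ref{fullness} for the hard step---is exactly the paper's. The gap is in the inductive mechanism. Re-expanding the corrections $f_{12}=[f_1,f_2]_{\bar q}$ and $f_{23}=[f_2,f_3]_{\bar q^2}$ back into $f_1,f_2,f_3$ simply undoes the commutation you just performed and produces a tautology rather than a reduction. Already for $w=f_1$ your recipe gives
\[
f_2 f_1 v_{l,k}=q f_1 f_2 v_{l,k}-q f_{12} v_{l,k}
=q f_1 v_{l,k+1}-q\bigl(f_1 v_{l,k+1}-q^{-1}f_2 f_1 v_{l,k}\bigr)=f_2 f_1 v_{l,k},
\]
and the same circularity persists for longer $w$: the correction coming from the first letter always rebuilds the original expression, so the induction on the multi-degree of $w$ never gets off the ground.

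The paper avoids this by \emph{not} re-expanding: it keeps $f_{12},f_{23}$ and $f_\xi=f_{123}$ as atoms and uses that every $f_{ij}$ quasi-commutes with $f_1,f_3$ except for $[f_{12},f_3]_{\bar q^2}=f_\xi=[f_1,f_{23}]_{\bar q}$. Pushing $f_2$ through $U_q(\l_-)$ then yields the finite decomposition
\[
f_2 L\subset U_q(\l_-)f_{12}B+U_q(\l_-)f_{23}B+U_q(\l_-)f_\xi B+L,
\]
after which (\ref{aux_comm_rel}) gives $f_{12}B,\,f_{23}B\subset L$, while Lemma~\ref{fullness} (together with $[f_2,f_\xi]_q=f_\theta$ and $[f_2,f_\theta]_{\bar q}=0$) handles $f_\xi B\subset L$. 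Your ``main obstacle'' $f_2^{k-1}f_3f_1f_2\,1_\la$ is essentially $f_\xi$ acting on $B$, but note that you need it for all of $B$, not only for $1_\la$: Lemma~\ref{fullness} is an identity modulo $J$, so it applies to $f_\theta^l 1_\la$ for every $l\geqslant 0$, which your write-up should make explicit.
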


\begin{proof}
  It is sufficient to check that the $U_q(\l)$-submodule $L$ is invariant under $U_q(\g_-)$ as it contains $1_\la$.
  That is so if and only it is $f_{2}$-invariant.

The elements $f_{ij}$ with $i<j$ quasi-commute with $f_1$ and $f_3$ unless $k=i-1$ or $k=j+1$.
Therefore
$$
f_2L\subset U_q(\l_-)f_{12}B+U_q(\l_-)f_{23}B+U_q(\l_-)f_{\xi}B+L.
$$
Notice that $f_{12}$ quasi-commutes with every power of  $f_2$ while $f_{23}$ quasi-commutes with it modulo $J$
because $f_\dt \in J$ commutes with $f_2$ and $f_3$.
Therefore we can further push them  to the right until they hit $f_\theta$-s and then apply (\ref{aux_comm_rel1}).
This way we prove $f_{12}B\subset L$   and
$f_{23}B  \subset L
$, with the help of Corollary \ref{theta-normalizer}.

Furthermore, push $f_\xi$ to the right  in the third term until it hits $f_\theta$-s, using   $[f_2,f_\xi]_q=f_\theta$.
Then for all $k,l\in \Z_+$ we get $f_\xi f_2^kf_\theta^l 1_\la=f_2^k f_\xi f_\theta^l 1_\la$ modulo $L$ because
$f_\theta$ quasi-commutes with $f_2$ by (\ref{Ap-Serre_step}). But  $ f_\xi f_\theta^l 1_\la \propto f_1f_3f_2 f_\theta^l 1_\la$
because $f_1$ and $f_3$ are in $J$ and kill $f_\theta^l 1_\la$ by Corollary \ref{theta-normalizer}.
Applying Lemma \ref{fullness} to $f_\xi f_2^kf_\theta^l1_\la \propto f_2^k f_1f_3f_2 f_\theta^l1_\la$  we prove $f_\xi B\subset L$.
Then  $f_2L\subset  U_q(\l_-)f_{\xi}B+L\subset L$, as required.
\end{proof}

If follows from Proposition \ref{RAsl3} that
\be
[e_{2},f_\theta^k]=[k]_qf_\xi f_\theta^{k-1} q^{-h_2}, \quad [e_\theta^k,f_\xi]=-q^{-(k-1)}[2]_q[k]_qe_\theta^{k-1}e_2q^{-h_\xi}.
\label{U(m)}
\ee
Setting $\la_i=(\al_i,\la)$ we get as a consequence that
\be
e_{2}f_2^lf_\theta^k 1_\la=[l]_q[\la_3-l-k]_qf_2^{l-1} f_\theta^k 1_\la +[k]_q q^{-\la_2}f_2^l f_\xi f_\theta^{k-1}1_\la.
\label{e2-action}
\ee
\begin{propn}
The module $M$ is irreducible.
\end{propn}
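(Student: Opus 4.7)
The plan is to reduce irreducibility to a combinatorial descent on the two-index family $\{L_{l,k}\}_{l,k\ge 0}$. I would first check that each $L_{l,k}$ is a simple $U_q(\l)$-module of highest weight $\la-k\al_2-l\theta$: its generator $f_2^k f_\theta^l 1_\la$ is $U_q(\l_+)$-singular by Proposition~\ref{l-singular}, and the pairings $(\la-k\al_2-l\theta,\al_1)=\la_1+k$ and $(\la-k\al_2-l\theta,\al_3)=\la_3+2k$ are dominant integral since $[\la_1]_q=[\la_3]_q=0$ (from $q^{2\la_1}=q^{2(\la,\ve_1)}/q^{2(\la,\ve_2)}=1$ and $q^{2\la_3}=q^{4(\la,\ve_3)}=1$). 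Distinct $(l,k)$ give distinct highest weights because $\al_2$ and $\theta$ are linearly independent in $\h^*$, so any $U_q(\g)$-submodule $N\subseteq M$, being $\h$-graded and $U_q(\l)$-stable, decomposes canonically as $N=\bigoplus_{(l,k)\in S}L_{l,k}$ for some $S\subseteq\Z_+\times\Z_+$.

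The proposition then reduces to showing that $S\ne\emptyset$ forces $(0,0)\in S$, since then $1_\la\in N$ and $N=M$. I would carry out this reduction by two descent moves. Move~A ($k\to k-1$): for $(l,k)\in S$ with $k\ge 1$, formula (\ref{e2-action}) writes $e_2(f_2^k f_\theta^l 1_\la)$ as a sum $\kappa_{l,k}\cdot f_2^{k-1}f_\theta^l 1_\la+v$ with $v\in\bigoplus_{(l',k')\ne(l,k-1)}L_{l',k'}$; provided $\kappa_{l,k}\ne 0$, the $L_{l,k-1}$-projection of $e_2(f_2^k f_\theta^l 1_\la)\in N$ is a nonzero multiple of the generator of $L_{l,k-1}$, forcing $L_{l,k-1}\subseteq N$. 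Move~B ($l\to l-1$, bumping $k$ to $1$): the identity $[e_\xi,f_\theta]=-q[2]_q f_2 q^{h_\xi}$, a short computation from Proposition~\ref{RAsl3} using $q^{h_\xi}f_2=qf_2 q^{h_\xi}$, yields $e_\xi f_\theta^l 1_\la=c_l\cdot f_2 f_\theta^{l-1} 1_\la$ with $c_l\ne 0$, landing exactly on the generator of $L_{l-1,1}$. Alternating the moves, $(l,k)\xrightarrow{A^k}(l,0)\xrightarrow{B}(l-1,1)\xrightarrow{A}(l-1,0)\to\cdots\to(0,0)$.

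The hard part will be verifying $\kappa_{l,k}\ne 0$ in Move~A when the correction term $[l]_q q^{-\la_2}f_2^k f_\xi f_\theta^{l-1}1_\la$ of (\ref{e2-action}) is non-trivial, i.e.\ when $l\ge 1$. The ``main'' contribution to $\kappa_{l,k}$ is $[k]_q[\la_3-k-l]_q=-[k]_q[k+l]_q\ne 0$ for generic $q$ and $k\ge 1$, but the correction vector lies in the same weight space $M[\la-(k-1)\al_2-l\theta]$ and its projection onto the highest-weight line $\C f_2^{k-1}f_\theta^l 1_\la$ of $L_{l,k-1}$ could in principle interfere. I plan to compute this projection by rewriting $f_2^k f_\xi f_\theta^{l-1}1_\la$ in PBW-ordered form using the $U_q(\m)\simeq U_q(\s\l(3))$-relations $[f_2,f_\xi]_q=f_\theta$, $f_2f_\theta=q^{-1}f_\theta f_2$, and $f_\xi f_\theta=qf_\theta f_\xi$, which reduces the matter to reading off one explicit scalar coefficient; a generic-$q$ argument (or a check in the $q\to 1$ classical limit, where the coefficient becomes an integer controlled by the quadratic Casimir on the relevant $\k$-type) then rules out any cancellation with the main term and completes the descent.
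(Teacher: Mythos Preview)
Your descent strategy via the $U_q(\l)$-decomposition $M=\bigoplus_{l,k} L_{l,k}$ is exactly the paper's framework, and your Move~B is the paper's key step in disguise: on a $U_q(\l_+)$-singular vector every monomial in $e_\xi=[e_3,[e_2,e_1]_q]_{q^2}$ ending in $e_1$ or $e_3$ vanishes, so $e_\xi$ acts as $-q\,e_1e_3e_2$, which is precisely the operator the paper iterates. The gap is in your ordering. Since $[e_\xi,f_2]=0$ (Proposition~\ref{RAsl3}), Move~B works at \emph{every} $k$, not only at $k=0$:
\[
e_\xi\, f_2^k f_\theta^l 1_\la \;=\; f_2^k\bigl(e_\xi f_\theta^l 1_\la\bigr)\;=\;c_l\, f_2^{k+1} f_\theta^{l-1} 1_\la .
\]
Iterating $l$ times reaches $(0,k+l)$; then Move~A at $l=0$ has no correction term (the second summand in (\ref{e2-action}) carries the factor $[l]_q=0$) and descends straight to $(0,0)$. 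This is the paper's route, and the ``hard part'' never appears.

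As you have ordered the moves ($A^k$ first), you must compute the projection of $f_2^k f_\xi f_\theta^{l-1} 1_\la$ onto the highest-weight line of $L_{l,k-1}$ and rule out cancellation with the main term for \emph{every} $(l,k)$ and every $q$ that is not a root of unity. Your proposed PBW rewriting is fine in principle, but you do not carry it out, and the fallback to a generic-$q$ or classical-limit check is insufficient here since the proposition is claimed for all admissible $q$, not merely generic ones. So as written there is a genuine gap; the simplest repair is to reorder your own two moves as above.
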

\begin{proof}
It is sufficient to check that none of the $U_q(\l)$-singular vectors $f_2^l f_\theta^k1_\la$
with $l+k>0$ is killed by $e_2$. For $k=0$  this is straightforward: $e_2^l f_2^l 1_\la=[l]_q \prod_{i=0}^{l-1}[\la_2-i]_q1_\la$.
This never turns zero because $q^{2\la_2}=q^{2(\la,\ve_2-\ve_3)}=q^{2(\la,\ve_2)}=-q^{-2}$.
For $k>0$, the operator $e_{1}e_{3}$ annihilates the first term in (\ref{e2-action}) and returns $f_2^{l+1} f_\theta^{k-1}1_\la$, up to a non-zero scalar multiplier, on the second.
Proceeding this way we obtain
$
(e_{1}e_{3}e_2)^kf_2^lf_\theta^k1_\la\varpropto f_2^{k+l}1_\la\not =0.
$
Therefore $f_2^lf_\theta^k1_\la\not =0$ and $f_2^lf_\theta^k1_\la\not \in \ker(e_2)$ unless $l+k=0$. Hence these vectors
are highest for different $U_q(\l)$-submodules in $M$ and none of them is singular for $U_q(\g)$.
\end{proof}
In summary, $M$ is isomorphic to the natural $U_q\bigl(\g\l(2)\bigr)-U_q\bigr(\s\l(2)\bigl)$-bimodule $\C_q[\End(\C^2)]$.
It is semi-simple and  multiplicity free.
In the classical limit, the subalgebra of $U(\l_+)$-invariants in $\C[\C^2\tp \C^2]\simeq \C[\End(\C^2)]$ is a polynomial algebra in two variables
generated by the principal minors of the coordinate matrix, see e.g. \cite{GW}.
In the quantum case, the space of $U_q(\l_+)$-invariants in $M$ is isomorphic to a polynomial algebra in quasi-commuting variables $f_2, f_\theta$.
\begin{corollary}
  The infinitesimal character of the base module $M$ equals $\prod_{\al\in \Rm^+\backslash\Rm^+_\k}(1-e^{-\al})^{-1} e^\la$.
\end{corollary}
\begin{proof}
Readily follows from an isomorphism $\C[\End(\C^2)]\simeq U(\g_-/\k_-)$ of $U(\l)$-modules.
\end{proof}
\subsection{Orthonormal basis in $M$}

A symmetric bilinear form $(.,.)$ on a $U_q(\g)$-module $V$ is called contravariant if $(x v, w)=(v,\omega (x)w)$ for
all $x\in U_q(\g)$ and all $v,w\in V$.
Recall that every highest weight module over a reductive quantum group has a unique contravariant form
with respect to the involution $\omega$ normalized to $1$ on the highest vector.
In this section we construct an orthonormal basis in $M$, with the help of the subalgebras $U_q(\l)$ and $U_q(\m)$.
It can be constructed as the Gelfand-Zeitlin basis in every $U_q(\l)$-submodule $L_{l,k}\subset M$, up to a common factor equal
 to the norm of the highest vector of $L_{l,k}$. Thus the problem essentially reduces to calculation of
those norms. That is done within a $U_q(\m)$-submodule in $M$ generated by $1_\la$ because the space of $U_q(\l_+)$-invariants is in that submodule.
\begin{propn}
 \label{recurrence for matrix elements}
 Set  $\la_\theta=(\la,\theta)$. Then the assignment $(l,k)\mapsto\tilde c_{l,k}=\langle 1_\la,e_\theta^k e_2^l f_2^l f_\theta^k1_\la\rangle$
  is a unique function $\Z_+^2\to \C$ satisfying
$$
\tilde c_{l,k}=-\tilde c_{l,k-1} [2]_q[k]_q^2 q^{-\la_\theta+l+1}+q^{-k} [l]_q[\la_2-l+1]_q\tilde c_{l-1,k}, \quad lk\not =0,
$$
$$
\hspace{-1.4in}\mbox{and }\quad\quad \quad \quad \quad \quad\quad \tilde c_{l,0}=[l]_q!\prod_{i=0}^{l-1}[\la_2-i]_q, \quad
\tilde c_{0,k}=[k]_q![2]_q^k\prod_{i=0}^{k-1}[\la_\theta-i]_q.
$$
\end{propn}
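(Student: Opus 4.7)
The plan is to derive the two boundary identities and the recurrence directly from the definition $\tilde c_{l,k}=\langle 1_\la, e_\theta^k e_2^l f_2^l f_\theta^k 1_\la\rangle$, using the commutation relations already established: the $U_q(\m)\simeq U_q(\s\l(3))$ subalgebra structure from Proposition \ref{RAsl3}, the identity (\ref{e2-action}) for $e_2 v_{l,k}$, and the two relations in (\ref{U(m)}). Throughout, $v_{l,k}$ denotes $f_2^l f_\theta^k 1_\la$ and I use $\langle 1_\la,1_\la\rangle=1$ with the one-dimensionality of $M[\la]$.

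The boundary values come first. For $\tilde c_{l,0}$ the calculation is the standard $U_q(\s\l(2))$ one in the triple $(e_2,f_2,h_2)$: since $e_2 1_\la=0$, iterating $e_2 f_2^j v=f_2^j e_2 v+[j]_q[h_2-j+1]_q f_2^{j-1}v$ with $v=1_\la$ yields $e_2^l f_2^l 1_\la=[l]_q!\prod_{i=0}^{l-1}[\la_2-i]_q\,1_\la$. For $\tilde c_{0,k}$ the analogous argument uses the pair $(e_\theta,f_\theta)$, but now the relevant commutator is $[e_\theta,f_\theta]=[2]_q[h_\theta]_q$; this follows from Proposition \ref{RAsl3} together with the nonstandard normalization $[e_\xi,f_\xi]=[2]_q[h_\xi]_q$ recorded just above it. Each application of $e_\theta$ therefore contributes an extra factor of $[2]_q$, producing the $[2]_q^k$ in the stated boundary formula.

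For the recurrence I would apply $e_2$ to $v_{l,k}$ using (\ref{e2-action}), which expresses $e_2 v_{l,k}$ as a linear combination of $v_{l-1,k}$ and the impure vector $f_2^l f_\xi f_\theta^{k-1}1_\la$. Sandwiching between $1_\la$ and $e_\theta^k e_2^{l-1}$, the first summand evolves into a multiple of $\tilde c_{l-1,k}$ after commuting $e_\theta^k$ past $e_2^{l-1}$ using the Serre-type identity $[e_\theta,e_2]_q=0$ from Proposition \ref{RAsl3} (which gives $e_\theta^k e_2^{l-1}=q^{k(l-1)}e_2^{l-1}e_\theta^k$). For the second summand I would first use $[f_\xi,f_\theta]_q=0$ to move $f_\xi$ past $f_\theta^{k-1}$ picking up $q^{k-1}$, and then apply the second identity of (\ref{U(m)}), $[e_\theta^k,f_\xi]=-q^{-(k-1)}[2]_q[k]_q e_\theta^{k-1}e_2 q^{-h_\xi}$, to pull $e_\theta^k$ across $f_\xi$. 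The ``transported'' piece $f_\xi e_\theta^k$ becomes harmless because $e_\xi$ annihilates $1_\la$ after applying contravariance, so only the commutator term survives and collapses to a multiple of $\tilde c_{l,k-1}$.

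The main technical obstacle will be the bookkeeping of $q$-powers generated by commuting the Cartan factors $q^{\pm h_2}, q^{\pm h_\xi}$ past $f_2,f_\theta,f_\xi$; these produce exactly the exponents $q^{-\la_\theta+l+1}$ and $q^{-k}$ as well as the sign and the $[2]_q[k]_q^2$ appearing in the recurrence, and they combine through the identity $\la_\xi+\la_2=\la_\theta$ (from $\theta=\xi+\al_2$). Once the recurrence is in hand, uniqueness is automatic: induction on $l+k$ determines $\tilde c_{l,k}$ from the two boundaries (for $lk=0$) and from $\tilde c_{l,k-1}$ and $\tilde c_{l-1,k}$ otherwise.
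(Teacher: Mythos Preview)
Your treatment of the boundary values and of uniqueness is correct and matches the paper. The gap is in the recurrence step, in how you handle the impure summand $[k]_q q^{-\la_2} f_2^l f_\xi f_\theta^{k-1}1_\la$ produced by (\ref{e2-action}). Moving $f_\xi$ to the \emph{right} past $f_\theta^{k-1}$ does not bring it next to $e_\theta^k$: after that move you face $\langle 1_\la,\, e_\theta^k e_2^{l-1} f_2^l f_\theta^{k-1} f_\xi 1_\la\rangle$, with $e_2^{l-1} f_2^l f_\theta^{k-1}$ still separating $e_\theta^k$ from $f_\xi$, so the second identity of (\ref{U(m)}) cannot be applied as you describe. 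The only way to complete your route is to move $f_\xi$ \emph{left} through $f_2^l$ via the $U_q(\m)$ identity $f_2^l f_\xi = q^l f_\xi f_2^l + [l]_q f_\theta f_2^{l-1}$; this produces an extra $[l]_q q^{l-1} v_{l-1,k}$ which is not optional: the coefficient of $\tilde c_{l-1,k}$ delivered directly by (\ref{e2-action}) is $[l]_q[\la_2-k-l+1]_q$, and only after adding this extra piece does the identity $[\la_2-k-l+1]_q + [k]_q\, q^{-(\la_2-l+1)} = q^{-k}[\la_2-l+1]_q$ restore the stated coefficient. (Your remark about commuting $e_\theta^k$ past $e_2^{l-1}$ for the first summand is also misplaced: that summand is already $\tilde c_{l-1,k}$ by definition.)

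The paper sidesteps all of this with one preliminary swap: write $f_2^l f_\theta^k = q^{-lk} f_\theta^k f_2^l$ using $[f_2,f_\theta]_{\bar q}=0$, and only then push one $e_2$ to the right. Now $e_2$ meets $f_\theta^k$ first, and the first identity of (\ref{U(m)}) places the resulting $f_\xi$ on the \emph{left} of $f_\theta^{k-1} f_2^l$. Since $[e_2,f_\xi]=0$ it slides past $e_2^{l-1}$ to sit beside $e_\theta^k$, where the second identity of (\ref{U(m)}) applies immediately; the term $f_\theta^k e_2 f_2^l 1_\la$ gives $[l]_q[\la_2-l+1]_q\, f_\theta^k f_2^{l-1}1_\la$, which after swapping back yields $q^{-k}[l]_q[\la_2-l+1]_q\,\tilde c_{l-1,k}$ directly, with no auxiliary cancellation needed.
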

\begin{proof}
The boundary conditions easily follow from the basic relations of $U_q(\m)$. Uniqueness can be checked by an obvious
induction on $l+k$. To prove the recurrence relation permute $f_\theta^k$ and $f_2^l$, then
in the resulting matrix element $q^{-lk}\langle 1_\la,e_\theta^k e_2^l f_\theta^kf_2^l1_\la \rangle$ push one
copy of $e_2$ to the right:
$$
\tilde c_{l,k}=q^{-kl}\langle 1_\la, e_\theta^kf_\xi  e_2^{l-1}f_\theta^{k-1} f_2^l 1_\la\rangle  [k]_qq^{-\la_2+2l}
 +q^{-k}\langle 1_\la, e_\theta^k e_2^{l-1}f_2^{l-1} f_\theta^k  1_\la\rangle [l]_q[\la_2-l+1]_q
$$
$$
=-\tilde c_{l,k-1} [2]_q[k]_q^2 q^{-\la_\theta+l-1}+q^{-k} [l]_q[\la_2-l+1]_q\tilde c_{l-1,k}.
$$
This calculation is actually done in $U_q(\m)$. In particular, we used (\ref{U(m)}) and $[f_2,f_\theta]_{\bar q}=0$.
\end{proof}
\begin{propn}
\label{matrix elements}
The matrix element $c_{l,k}=\langle f_2^l f_\theta^k1_\la,f_2^l f_\theta^k1_\la\rangle$ equals $(-1)^{l+k} q^{k(k-5)+ lk+l(l-1)} \times q^{-l(\la,\al_2)}\tilde c_{l,k}$,
with
\be
\tilde c_{l,k}=[l]_q![k]_q![2]_q^k\prod_{i=0}^{l-1}[\la_2-i]_q\frac{\prod_{i=0}^{l+k-1}[\la_\theta-i]_q}{\prod_{i=0}^{l-1}[\la_\theta-i]_q}.
\label{mat_el_form}
\ee
\end{propn}
\begin{proof}
Let $\bar f_\theta\in U_q(\g_-)$ be the vector obtained from $f_\theta$ by the substitution $q^{-1}\to q$.
Using the formula (\ref{f_theta-bar_f_theta}),
replace $f_\theta$ with $q^{-2} \bar f_\theta$ in the left argument.
Then $c_{l,k}$ equals
$$
\langle f_2^l f_\theta^k 1_\la,f_2^lf_\theta^k 1_\la \rangle=(-1)^kq^{-2k}\langle f_2^l\bar f_\theta^k 1_\la,f_\theta^kf_2^l1_\la \rangle
=(-1)^{l} q^{-2k} \langle 1_\la,(q^{-h_\theta-4}e_\theta)^k(q^{-h_2}e_2)^lf_2^lf_\theta^k1_\la \rangle
$$
since $\omega(\bar f_\theta)=-q^{-h_\theta-4}e_\theta$.
One can express the right hand side through $\tilde c_{l,k}=\langle 1_\la,e_\theta^k e_2^l f_2^l f_\theta^k1_\la\rangle$ and check that $\tilde c_{l,k}$  defined by (\ref{mat_el_form}) satisfies the conditions of
Proposition (\ref{recurrence for matrix elements}).
\end{proof}
Note that $\la_\theta$ can be replaced with $\la_2$ because $q^{2\la_2}=-q^{2}=q^{2\la_\theta}$.
\begin{corollary}
  The system
$
y^{l,k}_{i,j}=\frac{1}{\sqrt{[2]_q^jd_{l,i}d_{l,j}c_{l,k}}}f_1^i f_3^j f_2^l f_\theta^k 1_\la$, where $l,k\in \Z_+$,
$i,j\leqslant l$, and $d_{l,m}=(-1)^mq^{-m(l-m+1)}[m]_q[l-m+1]_q $,
 is an orthonormal basis with
  respect to the contravariant form on $M$.
\end{corollary}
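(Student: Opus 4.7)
The plan is to leverage the $U_q(\l)$-module decomposition of $M$ together with the explicit norm $c_{l,k}$ from Proposition \ref{matrix elements}, reducing the remaining verification to standard quantum $\s\l(2)$-identities along the commuting simple roots $\al_1$ and $\al_3$.

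First I would record the decomposition $M=\bigoplus_{l,k\in\Z_+}L_{l,k}$ into its multiplicity-free isotypic components for $U_q(\l)$, where $L_{l,k}$ is the simple $U_q(\l)$-submodule generated by the singular vector $f_2^l f_\theta^k 1_\la$ of weight $\mu_{l,k}=\la-l\al_2-k\theta$. This follows from Proposition \ref{l-singular}, the fullness statement that these submodules exhaust $M$, and the concluding identification of $M$ with the multiplicity-free $U_q\bigl(\g\l(2)\bigr)$--$U_q\bigl(\s\l(2)\bigr)$-bimodule $\C_q\bigl[\End(\C^2)\bigr]$ at the end of Section 2.1. Since $\al_2$ and $\theta$ are linearly independent in $\h^*$, the weights $\mu_{l,k}$ are pairwise distinct and the $L_{l,k}$ are pairwise non-isomorphic simple $U_q(\l)$-modules.

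Next I would exploit the $U_q(\l)$-contravariance of the form: $\omega$ restricts to the contravariant involution on the Hopf subalgebra $U_q(\l)\subset U_q(\g)$, so a standard Schur-type argument forces $\langle L_{l,k},L_{l',k'}\rangle=0$ whenever $(l,k)\neq(l',k')$. Inside each $L_{l,k}$ the vectors $f_1^i f_3^j f_2^l f_\theta^k 1_\la$ have pairwise distinct weights $\mu_{l,k}-i\al_1-j\al_3$ (as $\al_1,\al_3$ are linearly independent), hence they are mutually orthogonal by the weight argument for contravariant forms. This reduces orthonormality to a norm computation.

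For the norms, I would exploit the fact that $\al_1$ and $\al_3$ index the two commuting simple factors of $\l\simeq\g\l(2)\op\s\p(2)$, so $L_{l,k}$ is an external tensor product of a simple $U_q\bigl(\g\l(2)\bigr)$- and a simple $U_q\bigl(\s\p(2)\bigr)$-module, with $f_1,f_3$ commuting and acting on separate tensor factors. The squared norm of $f_1^i f_3^j f_2^l f_\theta^k 1_\la$ therefore factorises as $c_{l,k}$ (Proposition \ref{matrix elements}) times independent $\al_1$- and $\al_3$-contributions, each obtained by iterating the standard identity $\langle f^n v,f^n v\rangle=[n]!\prod_{s=0}^{n-1}[m-s]\|v\|^2$ at the relevant highest weight vector, with $m$ determined by the evaluations $(\mu_{l,k},\al_1)=l$ and $(\mu_{l,k},\al_3)=2l$. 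The principal obstacle is the bookkeeping of quantum-integer conventions in this last step: tracking $q$-integers along the short root $\al_1$ versus $q^2$-integers along the long root $\al_3$ — which after rescaling makes both $\s\l(2)$-factors of $L_{l,k}$ into $(l+1)$-dimensional simples, matching the range $i,j\leqslant l$ in the statement — and verifying that the denominator $[i]_q[j]_q\sqrt{c_{l,k}}$ in the stated normalisation exactly inverts the resulting product of norms.
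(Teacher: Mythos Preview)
The paper states this corollary without proof, and your plan is exactly the intended one: orthogonality between distinct $L_{l,k}$ via the $U_q(\l)$-contravariance and Schur argument, weight-orthogonality inside each $L_{l,k}$, and then the two commuting quantum $\s\l(2)$ norm identities along $\al_1$ and $\al_3$ on top of the value $c_{l,k}$ from Proposition~\ref{matrix elements}. Your observation that the Schur step is genuinely needed is correct: weight-orthogonality alone does not separate the components, since for instance $f_1f_3f_2^{2}1_\la\in L_{2,0}$ and $f_\theta 1_\la\in L_{0,1}$ both have weight $\la-\theta$.

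On the bookkeeping you flag at the end: the stated normalising factor $[i]_q[j]_q$ cannot be literally right, as it vanishes when $i=0$ or $j=0$. Your standard identity gives, up to powers of $q$ coming from the specific involution $\omega$, a squared norm of the shape
\[
[i]_q!\prod_{s=0}^{i-1}[l-s]_q\;\cdot\;[j]_{q^2}!\prod_{s=0}^{j-1}[l-s]_{q^2}\;\cdot\;c_{l,k},
\]
using $(\mu_{l,k},\al_1^\vee)=(\mu_{l,k},\al_3^\vee)=l$ as you computed. So the discrepancy is a misprint in the displayed constant, not a defect in your argument; the square root of the expression above is the correct denominator.
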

\section{Category $\O_t(\Hbb P^2)$}
\label{SecPPM}
While the base module $M$ supports a representation of  $\C_q[\Hbb P^2]$,
it generates a family of modules which may be regarded as "representations" of more general quantum vector bundles.
This interpretation is only possible if all such modules are completely reducible:  then they give rise
to projective modules over $\C_q[\Hbb P^2]$. They appear as submodules in tensor products $V\tp M$ (representing a trivial vector bundle),
for every  $V$ from the category  $\Fin_q(\g)$ of finite-dimensional quasi-classical $U_q(\g)$-modules.  Therefore the key issue is complete reducibility of tensor products $V\tp M$.
We solve this problem in the present section using a technique  developed in $\cite{M3,M5}$.
\subsection{Complete reducibility of tensor products}
\label{SecCRTP}
Suppose that $V$ and $Z$ are irreducible modules of highest weight. Each of them has a unique, upon a normalization,
nondegenerate contravariant  symmetric bilinear form, with respect to the involution $\omega\colon U_q(\g)\to U_q(\g)$.
Define a contravariant form on $V\tp Z$ as the product of the forms on the factors.
Then the module $V\tp Z$ is completely reducible if and only if the form on $V\tp Z$ is non-degenerate when
restricted to the span of singular vectors $(V\tp Z)^+$. Equivalently, if and only if every submodule of highest
weight in $V\tp Z$ is irreducible, \cite{M3}.

For practical calculations, it is convenient to deal with the pullback of the form under
an isomorphism of  $(V\tp Z)^+$ with a certain vector subspace in $V$ (alternatively, in $Z$)
which is defined as follows. Let $I^-_Z\subset U_q(\g_-)$ be the left
ideal annihilating a vector $1_\zt\in Z$ of highest weight $\zt$, and $I^+_Z=\si(I^-_Z)$ a left ideal in $U_q(\g_+)$.
Denote by $V^+_Z\subset V$ the kernel of $I^+_Z$, i.e. the subspace of vectors killed by $I^+_Z$.
Since $I^+_Z$ is $U_q(\h)$-invariant, $V^+_\Z$ is  $U_q(\h)$-invariant too.
There is a linear isomorphism between $V^+_Z$ and $(V\tp Z)^+$
assigning a singular vector $u=v\tp 1_\zt+\ldots $ to any weight vector $v\in V^+_Z$. Here we suppressed
the terms whose tensor $Z$-factors have lower weights than  $\zt$. Note that the isomorphism
$V^+_Z \to (V\tp Z)^+$ is "almost" $U_q(\h)$-equivariant: it shifts weights by $\zt$.

The pullback of the contravariant form under the map $V^+_Z\to (V\tp Z)^+$
 can be expressed through the contravariant form $\langle -, -\rangle$ on $V$
as $\langle \theta(v), w\rangle$, for a certain linear map $\theta$ on $V^+_Z$ with values in its dual space.
We call it extremal twist defined by $Z$.
In this paper, the contravariant form on $V$ is always non-degenerate when restricted to $V^+_Z$,
so we can write $\theta \in \End(V^+_Z)$.
This operator is related with the extremal projector $p_\g$, which is an
element of a certain extension $\hat U_q(\g)$ of $U_q(\g)$,  \cite{KT}.
It is constructed as follows.

A normal order on $\Rm^+$ defines an
embedding $\iota_\al\colon U_q\bigl(\s\l(2)\bigr)\to U_q(\g)$ for each $\al \in \Rm^+$, \cite{ChP}.
It acts by the assignment
$$
q\to q_\al=q^{\frac{(\al,\al)}{2}}, \quad e\to \tilde e_\al, \quad f_\al \to \tilde  f_\al, \quad q^h\to q^{h_\al},
$$
where $e,f$ and $q^h$ are the standard generators of $U_q\bigl(\s\l(2)\bigr)$ and the twiddled elements
are root vectors constructed via Lusztig automorphisms, \cite{ChP}.
For $\psi \in \h^*$, set $p_\g(\psi)$,  to be an ordered product
\be
p_\g(\psi) =\prod_{\al\in \Rm^+}^<p_\al\bigl((\psi+\rho, \al^\vee )\bigr),
\label{shifted_proj}
\ee
where $p_\al(z)$ is the image of
\be
\label{translationed_proj}
p(z)=\sum_{k=0}^\infty  f^k e^k \frac{(-1)^{k}q^{k(z-1)}}{[k]_q!\prod_{i=1}^{k}[h+z+i]_q}
\in \hat U_q\bigl(\s\l(2)\bigr), \quad z\in \C,
\ee
under $\iota_\al$.
For generic $\psi$, the operator   $p_\g(\psi)$ is well defined and  invertible on every finite-dimensional $U_q(\g)$-module. The specialization $p_\g=p_\g(0)$ is an idempotent satisfying
$e_\al p_\g=0=p_\g f_\al$ for all $\al\in \Pi$. This idempotent is called extremal projector.

The element $p_\g(\psi)$ gives rise to a rational trigonometric operator function of weight in every weight $U_q(\g)$-module that is
locally nilpotent over $U_q(\g_-)$.
\begin{thm}[\cite{M5}]
  \label{com_red_crit}
  Suppose that the map $p_\g(0)\colon V^+_Z\tp 1_{\zt}\to (V\tp Z)^+$ is well defined. Then
  $p_\g(\zt)$ is well defined as an operator on $ V_{Z}^+$. If $p_\g(\zt)$  invertible, then $\theta = p^{-1}(\zt)$.
\end{thm}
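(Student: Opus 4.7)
The plan is to connect the pullback form $\theta$ on $V^+_Z$ with the extremal projector by viewing the singular vector map $v\mapsto u_v$ as an appropriate application of $p_\g$ to $V\tp 1_\zt$, and then using self-adjointness of $p_\g$ with respect to the contravariant form.

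First I would analyse how $p_\g$ acts on $V\tp Z$ evaluated at $v\tp 1_\zt$. Each Cartan generator in the denominators $[h_\al+z+i]_q$ of the building blocks $\iota_\al\bigl(p(z)\bigr)$ acts through $\Delta(h_\al)=h_\al\tp 1+1\tp h_\al$, and on $1_\zt$ it returns the scalar $(\zt,\al^\vee)$. Consequently the action of $p_\g$ on $V\tp 1_\zt$ reduces, modulo terms whose $Z$-factor has weight strictly lower than $\zt$, to the shifted projector $p_\g(\zt)$ acting on the $V$-factor alone; the hypothesis on well-definedness of $p_\g$ on $V^+_Z\tp 1_\zt$ legitimates this operation despite $p_\g\in\hat U_q(\g)$ being an infinite sum. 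Since $p_\g f_\al=0=e_\al p_\g$, the vector $p_\g(v\tp 1_\zt)$ is singular, so it must coincide with $u_{p_\g(\zt)v}$. Invertibility of $p_\g(\zt)$ on $V^+_Z$ then yields the key identity
\begin{equation}
u_v = p_\g\bigl(p_\g(\zt)^{-1}v\tp 1_\zt\bigr).
\end{equation}

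Next I would invoke self-adjointness of $p_\g$ with respect to the contravariant form on any module admitting $p_\g$. Each factor $\iota_\al\bigl(p(z)\bigr)$ is a power series in $f_\al^k e_\al^k$ whose coefficients are rational functions of $h_\al$; applying $\omega$, which preserves $h_\al$ and satisfies $\omega(e_\al)=q^{-h_\al}f_\al$, $\omega(f_\al)=e_\al q^{h_\al}$, one verifies on each summand that $\omega\bigl(\iota_\al(p(z))\bigr)=\iota_\al(p(z))$ once the Cartan factors are collected. Taking the normal-ordered product this yields $\langle p_\g x,y\rangle=\langle x, p_\g y\rangle$ on $V\tp Z$.

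Now I would assemble the computation. For $v,w\in V^+_Z$,
\begin{equation}
\langle u_v,u_w\rangle_{V\tp Z}
=\langle p_\g\bigl(p_\g(\zt)^{-1}v\tp 1_\zt\bigr),u_w\rangle
=\langle p_\g(\zt)^{-1}v\tp 1_\zt, p_\g u_w\rangle
=\langle p_\g(\zt)^{-1}v\tp 1_\zt,u_w\rangle,
\end{equation}
using that $u_w$ is already singular. Because the form on $V\tp Z$ is the tensor product of the factor forms and $1_\zt$ is orthogonal to all vectors of weight strictly below $\zt$ in $Z$, only the leading term $w\tp 1_\zt$ of $u_w$ contributes; with $\langle 1_\zt,1_\zt\rangle=1$ this equals $\langle p_\g(\zt)^{-1}v,w\rangle_V$. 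Hence $\theta=p_\g^{-1}(\zt)$.

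The main obstacle I anticipate is not the algebra at the end but the self-adjointness statement for $p_\g$, since $p_\g$ lives in a completion and its $\s\l_2$-factors do not individually commute. One has to argue that in the product $\prod^<_{\al\in\Rm^+}\iota_\al\bigl(p(z)\bigr)$ the $\omega$-invariance of each factor, together with the fact that $\omega$ reverses the ordering of the Chevalley generators but the normal ordering is chosen compatibly with the root structure, delivers $\omega(p_\g)=p_\g$ on the nose when restricted to the appropriate weight subspaces. The verification that the action of $p_\g$ on $v\tp 1_\zt$ really produces $p_\g(\zt)v\tp 1_\zt$ modulo lower-weight terms is a second, more bookkeeping-heavy point; it is exactly what the "well-definedness" hypothesis in the statement is designed to underwrite.
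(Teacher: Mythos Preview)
The paper does not prove this theorem; it is quoted from \cite{M5} and used as a black box, so there is no in-paper argument to compare against. Your outline is the standard one behind that result: identify the leading $V\otimes 1_\zt$-component of $p_\g(v\otimes 1_\zt)$ with $p_\g(\zt)v\otimes 1_\zt$, use $\omega$-invariance of $p_\g$ to move it across the contravariant pairing, and collapse the lower-weight tail against $1_\zt$. The two points you flag---self-adjointness of the full extremal projector and the shift identity---are exactly the content that \cite{M5} supplies; your sketch is on the right track but would need those verifications spelled out to stand on its own. One small caution: in this paper's conventions $\omega=\sigma\circ\gamma$ does not give literally $\omega(e_\al)=q^{-h_\al}f_\al$, so if you carry the computation through you should redo that step with the actual antipode and $\sigma$ defined here rather than importing formulas from elsewhere.
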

In the case of our concern, $p_\g=p_\g(0)$ is well defined, cf. Proposition \ref{ext_proj_reg} below.
However, the operator $p_\g(\zt)$  may have poles as a function of $\zt$.
The above theorem implies that such poles are removable.
In the special case of the fundamental module $V=\C^6$ all weights in  $V^+_Z$ are multiplicity free. Then
$\det(\theta)\propto \prod_{\al\in \Rm^+}\prod_{\mu\in \La(V)}\theta^\al_\mu$ up to a non-zero factor,  with
\be
\theta^\al_\mu=\prod_{k=1}^{l_{\mu,\al}}\frac{[(\zt+\rho+\mu,\al^\vee)+k]_{q_\al}}{[(\zt+\rho,\al^\vee)-k]_{q_\al}}.
\label{theta igenvalues}
\ee
Here $l_{\mu,\al}$ is the maximal integer $k$ such that  $\tilde e_{\al}^kV^+[\mu]\not =\{0\}$ for
$\tilde e_\al=\iota_\al(e)$. We compute $\theta$ in
the next section.

 \subsection{Extremal twist and extremal projector}
 \label{SecETEP}
In this section we calculate the determinant of the extremal twist defined by the base module $M$ using its relation to extremal projector
 and show that it does not vanish at all $q$.

Denote  simple positive roots of the Lie subalgebra $\k\subset \g$ by $\bt_1=\al_1, \bt_2=\dt,\bt_3=\al_3$.
The corresponding fundamental weights  of  $\k$ are
$
\mu_1=\ve_1$, $\mu_2=\ve_1+\ve_2$, $\mu_{3}=\ve_{3}$.
Pick up an integral dominant (with respect to $\k$)  weight $\xi=\sum_{s=1}^{3}i_s\mu_s$ with  $\vec i= (i_s)_{s=1}^{3} \in \Z^{3}_+$
and set $\zt=\xi+\la$.
The Verma module $\hat M_{\zt}$ of highest weight $\zt$ and highest vector $1_\zt$ has
singular vectors
$\bar F_s^{i_s+1}1_{\zt}$, where $\bar F_s=f_s$, $s=1, 3$,  and
$$
\bar F_2=\bar q^2\Bigl( f_2^2 f_{3} \frac{[h_2-1]_p}{[h_2+1]_q}-f_2 f_{3} f_2[2]_q\frac{[h_2-1]_q}{[h_2]_q}+ f_{3} f_2^2\Bigr)\in \hat U_q(\b_-).
$$
That is straightforward for $\bar F_1^{i_1+1}1_{\zt}$ and for $\bar F_3^{i_3+1}1_{\zt}$ and follows from
\cite{M4}, Proposition 2.7, since $1_\zt$ generates a Verma  submodule over
the quantum subgroup $U_q(\s\p(4))$, cf. Remark \ref{ref_sphere}.

Denote by $\tilde M_{\vec i}$ the quotient of $\hat M_{\zt}$ by the submodule generated by $\{\bar F_s^{i_s+1}1_{\zt}\}_{s=1}^{3}$.
    The projection $\hat M_\zt\to \tilde M_{\vec i}$ factors through a parabolic Verma module relative to $U_q(\l)$: it is
     the quotient of $\hat M_\zt$ by the submodule generated by $\{f_s^{i_s+1}1_{\zt}\}_{s=1,3}$.
Therefore $\tilde M_{\vec i}$ is locally finite over $U_q(\l)$, \cite{M4}.
We use the same notation $1_\zt$ for the highest vector in $\tilde M_{\vec i}$.

Denote by $F_s^{i_s+1}\in U_q(\g_-)$ the Shapovalov elements, i.e. the images of singular vectors
 $\bar F_s^{i_s+1}1_{\zt}$ under the natural isomorphisms
 $
 U_q(\g_-)\simeq \hat M_\zt
 $,
 and set $E_s^{i_s+1}=\si(F_s^{i_s+1})\in U_q(\g_+)$.
Note with care that, contrary to  $\bar F_2^{i_2+1}$,  the elements  $F_2^{i_2+1}$  are not powers of  $F_2$.

 Let $\tilde I^-_{\vec i}\subset U_q(\g_-)$ denote the left ideal annihilating the highest vector in $\tilde M_{\vec i}$.
 and put $\tilde I^+_{\vec i}=\si(\tilde I^-_{\vec i})\subset U_q(\g_+)$.
 These ideals are generated by $\{F^{i_s+1}_s\}_{s=1}^3$ and $\{E^{i_s+1}_s\}_{s=1}^3$, respectively,
For $i=1,3$, these generators are simply  powers of simple root vectors.

From now to the end of the section we fix $V=\C^{6}$, the smallest fundamental module of $U_q(\g)$.
Up to non-zero scalar factors, the action of $U_q(\g_+)$ on $V$ is described by a graph
\be
v_{-1}\stackrel{e_1}{\longrightarrow} v_{-2}\stackrel{e_2}{\longrightarrow} v_{-3}\stackrel{e_3}{\longrightarrow} v_{3}\stackrel{e_2}{\longrightarrow} v_{2}\stackrel{e_1}{\longrightarrow} v_{1}
\label{graph}
\ee
where the vectors $v_{\pm i}$ of weights $\pm \ve_i$, $i=1,2,3$, form
an orthonormal basis with respect to the contravariant form.
The diagram for the $U_q(\g_-)$-action is obtained by reversing the arrows
in (\ref{graph}).
We find from the diagram that $\ker(E_s)$ equals
\be
V\ominus\Span\{v_{-1},v_{2}\},\> s=1,\quad
V\ominus \Span\{v_{-2}\}, \> s=2,\quad
V\ominus \Span\{v_{-{3}}\}, \> s=3.
\label{ker_E}
\ee
Furthermore,  $\ker(E_s^i)$ is entire $V$ if $i>1$.
 We denote by $\tilde V^+_{\vec i}=\cap_{s=1}^{3} \ker(E^{i_s+1}_s)$ the
 kernel of the left ideal $\tilde I^+_{\vec i}$ in $V$.

\begin{propn}
\label{ext_proj_reg}
  The extremal projector $p_\g\colon \tilde V^+_{\vec i}\tp 1_\zt\to (V\tp \tilde M_{\vec i})^+$  is well defined.
\end{propn}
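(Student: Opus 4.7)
My strategy is to verify regularity of each factor in the normal-ordered product $p_\g(\zt)=\prod_{\al\in\Rm^+}^<p_\al\bigl((\zt+\rho,\al^\vee)\bigr)$ directly. Two features of the data reduce the workload dramatically. First, since $V=\C^6$ is multiplicity free with weight set $\{\pm\ve_1,\pm\ve_2,\pm\ve_3\}$ and $\mu+2\al\notin\La(V)$ for any $\mu\in\La(V)$ and any $\al\in\Rm^+$, we have $e_\al^{2}\equiv 0$ on $V$, so $l_{\mu,\al}\leqslant 1$ for every pair $(\mu,\al)$. Each factor therefore contributes only a single denominator of the form $[(\zt+\rho+\mu,\al^\vee)+1]_{q_\al}$ when applied to a weight vector of weight $\mu$. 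Second, $\tilde V^+_{\vec i}$ decomposes into one-dimensional weight spaces of $V$, so the regularity check is weight-by-weight.

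For the two Levi roots $\al\in\Rm^+_\l=\{\al_1,\al_3\}$ the verification is immediate: the prescription for $\la$ yields $(\la,\al_1^\vee)=(\la,\al_3^\vee)=0$, the $\k$-dominance of $\xi$ gives $(\xi,\al_s^\vee)=i_s\in\Z_+$, and the bound $(\mu,\al_s^\vee)\geqslant-1$ for $\mu\in\La(V)$ leads to $(\zt+\rho+\mu,\al_s^\vee)+1=i_s+2+(\mu,\al_s^\vee)\geqslant 1$, a strictly positive integer whose $q$-bracket is non-zero for $q$ not a root of unity. For each of the eight roots in $\Rm^+_{\g/\l}=\{\ve_1+\ve_2,\ve_1\pm\ve_3,\ve_2\pm\ve_3,2\ve_1,2\ve_2\}$ I would use that the defining conditions $q^{2(\la,\ve_i)}=-q^{-2}$ for $i=1,2$ force $(\la,\ve_i)$ to acquire the imaginary shift $\frac{\i\pi}{2\ln q}$, while $(\la,\ve_3)=0$. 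Every such root pairs non-trivially with $\ve_1$ or $\ve_2$, so $(\la,\al^\vee)$ carries an imaginary contribution equal to $\frac{\i\pi}{2\ln q}$ or $\frac{\i\pi}{\ln q}$. In the first (half-shift) case the condition $q^{2n}=1$ becomes $q^{2m}=-1$, which has no solution for generic $q$; in the second (full-shift) case, arising only for $\al=\ve_1+\ve_2$, the condition reduces to $q^{2m}=1$ with real part $m=i_1+2i_2+3+(\mu,\ve_1+\ve_2)\geqslant 3$. In either case the denominator is non-zero.

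The bulk of the work is the finite, root-by-root bookkeeping over the eight elements of $\Rm^+_{\g/\l}$, together with the elementary $l_{\mu,\al}\leqslant 1$ check derived from the explicit $U_q(\g_+)$-action (\ref{graph}). I do not expect any conceptual obstacle: the distinguished value of $\la$, whose components pick up the imaginary shift $\frac{\i\pi}{2\ln q}$ in the directions $\ve_1,\ve_2$ transverse to the Levi, is precisely what forbids the integer resonances that would otherwise produce poles in $p_\g(\zt)$.
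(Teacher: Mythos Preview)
Your overall strategy---check regularity of each root factor $p_\al$ using the special arithmetic of $\la$---is the paper's strategy too. But your half-shift argument has a genuine gap at the long roots $2\ve_1$ and $2\ve_2$. For a long root one has $q_\al=q^{2}$, so the vanishing condition for the denominator $[x]_{q_\al}$ is $q^{4x}=1$, not $q^{2x}=1$. With the half-shift $\frac{\i\pi}{2\ln q}$ in $(\la,\al^\vee)$ this becomes $q^{4m}\cdot e^{2\pi \i}=q^{4m}=1$: the shift is neutralized and you are back to a genuine integer resonance $m=0$ that must be excluded by a direct check. Your claim that in the half-shift case ``the condition $q^{2n}=1$ becomes $q^{2m}=-1$'' is valid only for the four short roots $\ve_i\pm\ve_3\in\Rm^+_\g\setminus\Rm^+_\k$, not for $2\ve_1,2\ve_2$.

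This is precisely why the paper partitions $\Rm^+_\g\setminus\Rm^+_\l$ differently from you. For $\Rm^+_\g\setminus\Rm^+_\k=\{\ve_1\pm\ve_3,\ve_2\pm\ve_3\}$ the shift argument works (short roots with a true half-shift). For $\Rm^+_\k\setminus\Rm^+_\l=\{2\ve_1,2\ve_2,\ve_1+\ve_2\}$ the shift is trivialized---by $q_\al=q^2$ for the two long roots, and by the full shift for $\ve_1+\ve_2$, as you yourself observed---so one must verify directly that the real integer part is positive. The repair is easy: with $\xi=0$ and $k\geqslant 1$ the real parts of $(\zt+\rho+\mu,\al^\vee)+k$ are $(\mu,\ve_1)+2+k$, $(\mu,\ve_2)+1+k$, $(\mu,\ve_1+\ve_2)+3+k$ for $\al=2\ve_1,2\ve_2,\ve_1+\ve_2$ respectively, each strictly positive since $(\mu,\al^\vee)\in\{-1,0,1\}$; adding $(\xi,\al^\vee)\in\Z_+$ only helps.

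Two smaller points. Your set $\Rm^+_{\g/\l}$ has seven elements, not eight. And ``well defined'' here means not only that $p_\g$ is regular but that its image lies in $(V\tp\tilde M_{\vec i})^+$; the paper closes this by noting that regularity makes $p_\g(0)$ independent of the normal order, then choosing orders with a given simple $\al$ on the left to conclude $e_\al p_\g=0$ on $\tilde V^+_{\vec i}\tp 1_\zt$. Your sketch does not address this last step.
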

\begin{proof}
It is argued in \cite{M5} that the factors $p_\al(z)$ in (\ref{shifted_proj}) for $\al\in \Rm^+_\l$ are regular on $\tilde V^+_{\vec i}\tp 1_{\zt}$
at $z=(\rho,\al^\vee)$ because all weights in $\tilde V^+_{\vec i}\tp 1_{\zt}$ are $\k$- and therefore $\l$-dominant.

Suppose that $\al \in \Rm^+_\g\backslash\Rm^+_\l$ and evaluate
the denominators in $p_\al(z)$ at $z=(\rho, \al^\vee)$  on a tensor of weight $\eta=\mu+\zt$,  $\mu\in \La(\tilde V^+_{\vec i})$.
They contain
$[z+(\eta,\al^\vee)+k]_{q_\al}$ with  $k\in \N$. For $\al\in \Rm^+_\g\backslash\Rm^+_\k$,
such a  factor is  proportional to  $q^{x}+q^{-x}$ for some $x\in \Qbb$ and does not vanish because $q$ is not
a root of unity. Therefore
all  factors $p_\al(t)$ for such $\al$ are regular at $z=(\rho,\al^\vee)$. Moreover, the extremal projector of
the subalgebra $U_q(\g^{\al_2})$ is well defined on $V\tp 1_{\zt}$ taking it to $\ker  e_2$.

Now suppose that $\al\in \Rm^+_\k\backslash\Rm^+_\l$. With $\xi=0$ (i.e. $\zt=\la$), the factor $[(\eta+\rho,\al^\vee)+k]_q$ entering
$p_\al(t)$ is equal, upon evaluation of $h_\al$ on the subspace of weight $\eta=\mu+\zt$ in $\tilde V^+_{\vec i}\tp 1_{\zt}$, to
$$
[(\mu,\al^\vee)+2+k]_{q^2},\quad [(\mu,\al^\vee)+1+k]_{q^2}, \quad [(\mu,\al^\vee)+3+k]_{q},
$$
for $\al=2\ve_1,2\ve_2,\ve_1+\ve_2$, respectively. They are not zero since  $k>0$ and $(\mu,\al^\vee)\in \{-1,0,1\}$
for $\mu\in \La(\tilde V^+_{\vec i})$.
That is {\em a fortiori} true when $\xi\not =0$ because $(\xi,\al^\vee)\in \Z_+$. Therefore such $p_\al(t)$ are also
regular on $\tilde V^+_{\vec i}\tp 1_\zt$ at $t=(\rho,\al^\vee)$.

Thus all root factors in $p_\g(\psi)$ are regular on $\tilde V^+_{\vec i}\tp 1_{\zt}$ at $\psi=0$, so $p_\g(0)$ is independent of normal ordering.
For a simple root $\al$ choose an order with $\al$ on the left. Then $e_\al p_\g(0)=0$ on $\tilde V^+_{\vec i}\tp 1_{\zt}$.
We already saw  that for $\al=\al_2$; for $\al=\al_1,\al_3$
this is true because $V\tp M_{\vec i}$ is locally finite over $U_q(\l)$ and all weights in $\tilde V^+_{\vec i}\tp 1_{\zt}$ are dominant with respect to $\l$, cf. \cite{M5}.
This completes the proof.
\end{proof}
\noindent
Thus the first condition of Theorem \ref{com_red_crit} is satisfied. The second condition will be secured by the following calculation.
\begin{propn}
  \label{complete_red}
  For all $\xi= \sum_{s=1}^{3}i_s\mu_s$ with $\vec i\in \Z^3_+$, the operator $p_\g(\xi+\la)$  is invertible on $\tilde V^+_{\vec i}$.
\end{propn}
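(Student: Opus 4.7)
The plan is to use the eigenvalue formula (\ref{theta igenvalues}) for the operator $\theta = p_\g(\zt)^{-1}$ on $\tilde V^+_{\vec i}$, with $\zt = \xi + \la$. This formula is available because the weights of $V = \C^{6}$ are multiplicity free. Invertibility of $p_\g(\zt)$ amounts to the nonvanishing of each eigenvalue $\theta^\al_\mu$. The denominators $[(\zt+\rho,\al^\vee)-k]_{q_\al}$ are already shown to be nonzero in Proposition \ref{ext_proj_reg}, so the task reduces to verifying that each numerator $[(\zt+\rho+\mu,\al^\vee)+k]_{q_\al}$ is nonzero for every $\al \in \Rm^+$, every $\mu \in \La(\tilde V^+_{\vec i})$, and every $k = 1,\ldots, l_{\mu,\al}$. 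Since $V$ is the smallest fundamental module, the integers $l_{\mu,\al}$ are bounded by very small values and only a short list of cases arises.

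Following the same partition $\Rm^+ = \Rm^+_\l \cup (\Rm^+_\k - \Rm^+_\l) \cup (\Rm^+_\g - \Rm^+_\k)$ that structured the proof of \ref{ext_proj_reg}, I would treat each class in turn. For $\al \in \Rm^+_\g - \Rm^+_\k$, the resonance identities $q^{2(\la,\ve_i)} = -q^{-2}$ for $i=1,2$ and $q^{2(\la,\ve_{3})} = 1$ give $q^{2(\zt+\rho+\mu,\al^\vee)+2k} = \pm q^{m}$ for some integer $m$ depending on $(\vec i, \mu, k)$ but independent of $q$; for generic $q$ this prevents the numerator from vanishing, exactly mirroring the argument used there for the denominators. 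For $\al \in \Rm^+_\k - \Rm^+_\l = \{2\ve_1, 2\ve_2, \ve_1+\ve_2\}$ and $\al \in \Rm^+_\l = \{\al_1, \al_{3}\}$, I would combine three inputs: the $\k$-dominance of the weights in $\tilde V^+_{\vec i}$, which forces $(\mu,\al^\vee)\geq 0$ for $\al \in \Rm^+_\k$; the nonnegativity $(\xi,\al^\vee) = \sum_s i_s(\mu_s,\al^\vee) \geq 0$ coming from $\vec i \in \Z^{3}_+$; and the explicit values of $q^{2(\la,\al^\vee)}$ obtained from the same resonance identities. In each case the numerator reduces to $\pm[n]_{q_\al}$ with $n$ a strictly positive integer, hence nonzero for generic $q$.

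The main obstacle is the case bookkeeping: one must verify for each of the nine positive roots and each admissible triple $(\vec i, \mu, k)$ that the $\la$-contribution to $q^{2(\zt+\rho+\mu,\al^\vee)+2k}$ multiplies the remaining integer shift to a quantity that is not $1$. Once the three root classes are separated, the inequalities become elementary linear bounds on $i_s + k + (\mu,\al^\vee) + c$ with $c$ an explicit integer, and all are manifestly satisfied because $k \geq 1$, $i_s \geq 0$, and the pairings $(\mu,\al^\vee)$ for $\mu \in \La(V)$ and $\al \in \Rm^+_\k$ take only the values $\{-2,-1,0,1,2\}$ (further restricted by $\k$-dominance on $\tilde V^+_{\vec i}$).
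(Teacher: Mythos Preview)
Your outline contains a genuine gap: you have the roles of numerator and denominator reversed. Proposition~\ref{ext_proj_reg} checks that the factors $[t+(\eta,\al^\vee)+k]_{q_\al}$ with $\eta=\zt+\mu$ and $t=(\rho,\al^\vee)$ do not vanish; these are precisely the \emph{numerators} $[(\zt+\rho+\mu,\al^\vee)+k]_{q_\al}$ in (\ref{theta igenvalues}), not the denominators. What governs invertibility of $p_\g(\zt)$ is the nonvanishing of the \emph{denominators} $[(\zt+\rho,\al^\vee)-k]_{q_\al}$, and Proposition~\ref{ext_proj_reg} says nothing about those. So the work you propose to do (checking numerators) is redundant with Proposition~\ref{ext_proj_reg}, while the actual obstruction is left untreated.

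This matters because the denominators \emph{do} vanish. For $\al=\bt_s\in\Pi_\k$ with $k=1$, a short computation gives denominators proportional to $[i_1]_q$, $[i_2]_{q^2}$, $[i_3]_{q^2}$ respectively, which are zero whenever $i_s=0$. The paper's proof handles this by the following mechanism, which your outline does not mention: when $i_s=0$, the single weight $\mu$ with $l_{\mu,\al}=1$ for that $\al$ (namely $\mu=-\ve_2$ for $\al=2\ve_2$, $\mu\in\{-\ve_1,\ve_2\}$ for $\al=\al_1$, $\mu=-\ve_3$ for $\al=\al_3$) is excluded from $\tilde V^+_{\vec i}$ by (\ref{ker_E}), so the product in (\ref{theta igenvalues}) is empty and $\theta^\al_\mu=1$. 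Without this interplay between the vanishing locus of the denominator and the constraint defining $\tilde V^+_{\vec i}$, the argument cannot close.

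A secondary issue: the assertion that all weights of $\tilde V^+_{\vec i}$ are $\k$-dominant is false in general. For instance if $i_1\geqslant 1$ then $v_{-1}\in \tilde V^+_{\vec i}$, yet $(-\ve_1,\al_1^\vee)=-1$. So the inequality $(\mu,\al^\vee)\geqslant 0$ you invoke for $\al\in\Rm^+_\k$ is not available, and the bookkeeping for the $\Rm^+_\k$ case must be done by direct computation of each $\theta^\al_\mu$, as the paper does.
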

\begin{proof}
Let us calculate $\theta^\al_\mu$, which are inverse eigenvalues
of the root factors constituting $p_\g(\zt)$.
From (\ref{graph}) we conclude that all integers $l_{\mu,\al}$ in (\ref{theta igenvalues}) are at most $1$.
Put $\zt=\la+\xi$, then  (\ref{theta igenvalues}) reduces to  $\theta^\al_\mu=1$ for $l_{\al,\mu}=0$ and to $\theta^\al_\mu=\frac{[(\zt+\rho+\mu,\al^\vee)+1]_{q_\al}}{[(\zt+\rho,\al^\vee)-1]_{q_\al}}$
for $l_{\al,\mu}=1$.
Observe
that
$$
\theta^{\ve_1-\ve_3}_{-\ve_1},\quad \theta^{\ve_1-\ve_3}_{\ve_3},
\quad
\theta^{\ve_2-\ve_3}_{-\ve_2}, \quad \theta^{\ve_2-\ve_3}_{\ve_3},
\quad
\theta^{\ve_2+\ve_3}_{-\ve_2}, \quad \theta^{\ve_2+\ve_3}_{-\ve_3},\quad
\theta^{\ve_1+\ve_3}_{-\ve_1},\quad\theta^{\ve_1+\ve_3}_{-\ve_3}.
$$
are all of the form $\frac{\{m_1\}_q}{\{m_2\}_q}$ for some integers $m_1,m_2$, where
 $\{x\}_{q}=\frac{q^{x}+q^{-x}}{q+q^{-1}}$. They cannot turn zero as $q$ is not a root of unity.
The remaining non-trivial factors  $\theta^\al_\mu$ are
$$
\theta^{2\ve_1}_{-\ve_1}=\frac{[i_1+i_2+2]_{q^2}}{[i_1+i_2+1]_{q^2}}, \quad
\theta^{2\ve_2}_{-\ve_2}=\frac{[i_2+1]_{q^2}}{[i_2]_{q^2}},
\quad
\theta^{2\ve_3}_{-\ve_3}= \frac{[i_{3}+1]_{q^2}}{[i_{3}]_{q^2}},
$$
$$
\theta^{\ve_1-\ve_2}_{-\ve_1}=
\frac{[i_1+1]_{q}}{[i_1]_{q}}= \theta^{\ve_1-\ve_2}_{\ve_2},\quad
\theta^{\ve_1+\ve_2}_{-\ve_1}=
\frac{[i_1+2i_2+3]_{q}}{[i_1+2i_2+2]_{q}}=\theta^{\ve_1+\ve_2}_{-\ve_3}.
\quad
$$
Observe that the denominator in
$\theta^\al_\mu$ may vanish only
for $\al\in  \Pi_\k$. That happens if $i_s=0$ for some $s=1,2,3$. However, such $\mu$ do not belong to $\La(\tilde V^{+}_{\vec i})$,
as seen from  (\ref{ker_E}).
Since $q$ is not a root of unity, all $\theta^\al_\mu$ never turn zero. Therefore,  $p_\g(\zt)$ is invertible, and $\theta=p_\g(\zt)^{-1}$.
\end{proof}
In the next section we shall see that the kernels $\tilde V_{\vec i}^+$   parameterise irreducible decompositions
in a pseudo-parabolic category associated with $\Hbb P^2$.
\subsection{Pseudo-parabolic category  $\O_t(\Hbb P^2)$ and its structure}
In this section we define the pseudo-parabolic category over $\Hbb P^2$, prove its semi-simplicity and describe simple objects,
based on the results of the previous section.

Denote by $\O_t(\Hbb P^2)$ a full subcategory in the category $\O$ whose objects are submodules in $W\tp M$,
where $W\in \Fin_q(\g)$
is a quasi-classical finite-dimensional module over $U_q(\g)$. It is a module category over $\Fin_q(\g)$
because for every submodule $N\subset W\tp M$ and $U\in \Fin_q(\g)$, the module $U\tp N$ is in $U\tp W\tp M$.

We denote by  $\mathrm{Fin}(\k)$ the tensor category of finite-dimensional $\k$-modules. It is a module category
over $\mathrm{Fin}(\g)$ via the restriction functor.

Let $M_{\vec i}$ denote the irreducible quotient of $\tilde M_{\vec i}$ (we will later prove that they coincide at almost all $q$).
We call it pseudo-parabolic Verma module of the corresponding highest weight.

We define $V_{\vec i}^+$ as the kernel of the left ideal $I^+_{\vec i}=\si(I^-_{\vec i})$,
where $I^-_{\vec i}$ is the annihilator of the highest vector in $M_{\vec i}$. Obviously $V^+_{\vec i} \subseteq \tilde V^+_{\vec i}$
because  $\tilde I_{\vec i}^+\subseteq I_{\vec i}^+$.
The subspace $V^+_{\vec i}$ is isomorphic to the span of singular vectors in $V\tp M_{\vec i}$, in compliance with discussion of
Section \ref{SecCRTP}.
In principle, $\tilde V^+_{\vec i}$ might  be bigger than $V^+_{\vec i}$ but we shall see that they coincide for almost all $q$
(for all if $\dim V=6$.

From now until Corollary \ref{multiplicities} we assume that $V=\C^6$.
Let $X_{\vec i}\in \Fin(\k)$,  with
$\vec i\in \Z_+^3$,  denote the finite-dimensional $\k$-module of highest weight $\xi=\sum_{s=1}^{3}i_s\mu_s$.
For each $\vec i\in \Z_+^3$, introduce a set of triples $\tilde I(\vec i)\subset \Z_+^3$:
\be
\tilde I(\vec i)=\bigl\{(i_1\pm 1,i_2, i_{3}), \> (i_1, i_2, i_{3}\pm 1), \> (i_{1}\pm 1,i_{2}\mp 1,i_3)\bigr\},
\label{checked summands}
\ee
where  those with negative coordinates are excluded.
 Elements of $\tilde I(\vec i)$ parameterize irreducible  $\k$-submodules in  $V\tp X_{\vec i}$:
 their components are coordinates of highest weights in the basis of fundamental weights $\{\mu_s\}_{s=1}^3$.

Let $\Fin(\g\downarrow\k)$ denote the subcategory of $\k$-modules that are submodules in  modules from $\Fin(\g)$.
\begin{propn}
    $\Fin(\g\downarrow\k)\simeq\Fin(\k)$.
  \label{all Verma in PPO}
\end{propn}
\begin{proof}
Since $\Fin(\g)$ is generated by $V$ as a tensor category, it is sufficient to prove that
for each $\vec i\in \Z_+^3$ the $\k$-module $X_{\vec i}$ is in some tensor power of $V$.
  We do it by  induction on $|\vec i|=i_1+i_2+ i_{3}$.
 For $|\vec i|=0$, $X_{\vec i}$ is the trivial module  $\C$, which is in $\Fin(\g\downarrow\k)$.
Suppose that the statement is proved for all $X_{\vec i}$ with  $|\vec i|=m\geqslant 0$.
Fix an index $\vec i$ with $|\vec i|=m+1$ and let $\ell$ be the minimal $s\in \{1,2,3\}$ such that $i_s>0$.
We will separately  consider  two cases depending on the value of $\ell$.

For $\ell=1,3$  we define $\vec j^\ell\in \Z_+^3$ by setting $j^{\ell}_s=i_s-\dt_{s\ell}$.
 Then $\vec i \in \tilde I(\vec j^{\ell})$,  as follows from (\ref{checked summands}).
Since $|\vec j^{\ell}|=m$ by assumption,   $X_{\vec i}$ is in $\Fin(\g\downarrow\k)$.

In the case of  $\ell=2$ we consider  a pair of vectors $\vec j, \vec k \in \Z^{3}_+$ via
$j_s= i_s-\dt_{2 s}$ and $k_s= i_s+\dt_{1s}-\dt_{2 s}$ for $s=1,2,3$.
Since  $|\vec j|=m$, the module $X_{\vec j}$ is in $\Fin(\g\downarrow\k)$ by the induction assumption.
Now observe from (\ref{checked summands}) that $\vec k\in \tilde I(\vec j)$ and $\vec i\in \tilde I(\vec k)$.
Therefore  $ X_{\vec k}$ and $ X_{\vec i}$ are in $\Fin(\g\downarrow\k)$.
This completes the proof.
\end{proof}

Let us  denote by
 $f_{\beta_s},e_{\beta_s}\in \k$, $s=1,2,3$, its negative and positive (classical) Chevalley generators.
\begin{lemma}
For all $i\in \Z_+$, there are isomorphisms  $\ker (F_s^i)\simeq \ker (f_{\beta_s}^i)$ and $\ker (E_s^i)\simeq \ker (e_{\beta_s}^i)$,
 $s=1,2,3$, in $V$.
\label{class-quant}
\end{lemma}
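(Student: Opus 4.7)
My plan is to reduce the lemma to a direct weight-by-weight inspection of the six-dimensional fundamental module $V=\C^6$, treated separately for each simple root $\bt_s$ of $\k$, and then to match kernel dimensions with the classical side under the standard identification of the basis $\{v_{\pm i}\}_{i=1}^3$.

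For $s=1,3$ the elements $F_s^i$ and $E_s^i$ are just powers of the Chevalley generators $f_s,e_s$, and their action on $V$ is read directly off (\ref{graph}) and (\ref{ker_E}): for $i=1$ one has $\dim\ker E_1=4$ and $\dim\ker E_3=5$, and for $i\geqslant 2$ both $E_s^i$ and $F_s^i$ vanish on $V$. The classical root vectors $e_{\bt_1}=e_{\al_1}$ and $e_{\bt_3}=e_{\al_3}$ act on the classical fundamental $\s\p(6)$-module through the same weight shifts ($v_{-2}\mapsto v_{-1}$, $v_2\mapsto v_1$ for $s=1$; $v_{-3}\mapsto v_3$ for $s=3$), giving matching kernel dimensions.

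For $s=2$ I proceed by weight. The element $F_2^i$, lifting $\hat F_2^i 1_\zt$, has weight $-i\dt=-2i\ve_2$; since the weights of $V$ lie in $\{\pm\ve_j\}_{j=1}^3$, the only ordered pair of weights of $V$ differing by $-i\dt$ exists for $i=1$, connecting $V[\ve_2]$ to $V[-\ve_2]$. Hence $F_2^i|_V=0$ for $i\geqslant 2$, matching $e_\dt^i=0$ on the classical $V$. For $i=1$, $F_2$ can act nontrivially only through the entry $v_2\mapsto v_{-2}$. That this entry is nonzero is verified by comparison with the classical limit $q\to 1$, where $F_2$ becomes a nonzero multiple of $f_\dt\in\k$ sending $v_2$ to $v_{-2}$; the coefficient is rational in $q$ and $\zt$, and the genericity assumption on $q$ rules out cancellations. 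Parallel reasoning for $E_2^i$ (weight $+i\dt$) gives the corresponding statement via the entry $v_{-2}\mapsto v_2$.

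The step I expect to be the main obstacle is $s=2$, $i=1$: one must confirm that the scalar appearing in $F_2 v_2=\text{const}\cdot v_{-2}$, which depends rationally on $q$ and on the coefficients that arise when the factors $[h_2]_q^{-1}$ in $\hat F_2$ are specialized to $1_\zt$, is nonzero uniformly in $\zt\in\La^+_\k+\la$. This is controlled by the same type of rational-function regularity argument used in Propositions \ref{ext_proj_reg} and \ref{complete_red}, combined with the fact that at $q=1$ the quantum element reduces to a nonvanishing classical structure constant.
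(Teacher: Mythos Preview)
Your approach is essentially the same as the paper's: a direct weight-by-weight inspection on $V=\C^6$. The paper's own proof is just ``Elementary calculation. Remark that $F_s^i$ and $E_s^i$ are regular at the specified weights'', and your outline spells out what that calculation consists of.

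One small remark on the $s=2$, $i=1$ case: your detour through the classical limit and genericity in $q$ is weaker than needed (it yields only generic $q$, not all $q$ away from roots of unity) and also unnecessary. The more direct route, implicit in the paper's ``elementary calculation'' and already used in (\ref{ker_E}), is that on $v_2$ only the monomial $f_2 f_3 f_2$ in $F_2$ can survive, since $f_3 v_2=0$ and $f_2^2 v_2=0$ by the graph (\ref{graph}); hence the matrix entry $F_2 v_2$ is an explicit nonzero product of structure constants of $V$ times the middle coefficient of $F_2$. The paper's remark about regularity is precisely that this coefficient (coming from the Cartan factor in $\hat F_2$ specialised at $\zt$) is well defined, and its nonvanishing for $\zt\in\La^+_\k+\la$ is then read off directly rather than inferred from $q\to 1$. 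The same applies verbatim to $E_2$ on $v_{-2}$.

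A minor typo: for $s=1$ the nonzero entries of $e_{\al_1}$ on $V$ are $v_{-1}\mapsto v_{-2}$ and $v_2\mapsto v_1$, not $v_{-2}\mapsto v_{-1}$.
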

\begin{proof}
Notice that the case of $i>1$ is easy because the kernels coincide with the whole $V$.
The case $i=1$ is an elementary calculation based on the diagram (\ref{graph}).
\end{proof}
\noindent

\begin{corollary}
  \label{classical_ext_space}
 The vector space  $\tilde V^+_{\vec i}$ is isomorphic to  $(V\tp X_{\vec i})^{\k_+}$.
 \end{corollary}
\begin{proof}
First of all observe that $\cap_{s=1}^3\ker (e_{\bt_s}^{i_s+1}) \simeq \cap_{s=1}^3\ker (E_{\bt_s}^{i_s+1})$ because all weights
in $V$ are multiplicity free.
Then the statement is due to the isomorphism $\tilde V_{\vec i}^+\simeq \cap_{s=1}^3\ker (e_{\bt_s}^{i_s+1})$ because the
right-hand side is in bijection with the span of singular vectors in the $\k$-module $V\tp X_{\vec i}$.
\end{proof}

\begin{propn}
The category $\O_t(\Hbb P^2)$ is semi-simple,  and its simple objects are $U_q(\g)$-modules of  highest weights $\la+\xi$, $\xi\in \La^+_\k$.
\label{VMi-comp-red}
\end{propn}
\begin{proof}
Since $V^+_{\vec i}\subseteq \tilde V^+_{\vec i}$ and $M_{\vec i}$ is a quotient of $\tilde M_{\vec i}$,
the extremal projector $p_\g\colon V^+_{\vec i}\tp 1_\zt\to (V\tp M_{\vec i})^+$ is well defined,
  by Proposition \ref{ext_proj_reg}. The operator $p_\g(\zt)$ is invertible on  $V^+_{\vec i}$
  by Proposition \ref{complete_red}.
  Then the tensor product $V\tp M_{\vec i}$ is completely reducible, for each  $\vec i\in \Z^3_+$, thanks to Theorem \ref{com_red_crit}.
  The highest weights of irreducible submodules are from $\la+\La(\tilde V^+_{\vec i}\tp 1_{\vec i})\subset \la+\La^+_\k$.

  Now observe that every module from $\Fin_q(\g)$ can be realized as a submodule in a tensor power of $V$.
  Applying induction on $m\in \Z_+$ such that $M_{\vec i}\subset V^{\tp m}\tp M$ (for $m=0$ the statement is obvious)
  we prove that all modules from $\O_t(\Hbb P^2)$  are completely reducible and the weights of irreducible components
  are as stated.
\end{proof}

Since $M_{\vec i}$ is a quotient of $\tilde M_{\vec i}$,
singular vectors in $V\tp M_{\vec i}$ may have only weights $\sum_{s=1}^{3} j_s\mu_s+\la$
with $\vec j\in \tilde I(\vec i)$, by Proposition \ref{VMi-comp-red}.
 Let $I(\vec i)\subseteq \tilde I(\vec i)$  denote the subset of such triples. We aim to prove that $I(\vec i) = \tilde I(\vec i)$.

\begin{propn}
\label{pseudo-parabolic-irreducible}
 For each $\vec i \in \Z_+^3$:
\begin{enumerate}
  \item  $\Char(M_{\vec i})=\Char(X_{\vec i})\Char(M)$ for all $q$,
  \item all $M_{\vec i}$ with $\vec i\in \Z_+^3$ are in $\O_t(\Hbb P^2)$.
 \end{enumerate}
\end{propn}
 \begin{proof}

 Consider $\tilde M_{\vec i}$ as a $U_q(\g_-)$-module, $\tilde M_{\vec i}\simeq U_q(\g_-)/\tilde I_{\vec i}^-$, which makes sense at $q=1$
 too\footnote{Although the action of $U_q(\g)$  on these modules does not extend to the classical point $q= 1$, they are
quasi-classical as modules over $U_q(\g_-)$ and equipped with an obvious grading by weights.}.
 It the classical limit
 $q\to 1$, it  goes to a quotient of $U(\g_-)$ by the left ideal generated by
 $f_{\bt_s}^{i_s+1}$, $s=1,2,3$.
Therefore
$$
\Char(M_{\vec i})\leqslant \Char (\tilde M_{\vec i})\leqslant \Char(X_{\vec i}) \Char(\g_-/\k_-)e^{\la}=\Char(X_{\vec i})\Char(M),
$$
at generic $q$. That is, the inequalities hold for dimensions of subspaces of the same weight for almost all $q$.
 The set of $q$-s where they are violated may depend on the weight.

Suppose  that $\Char(M_{\vec i})=\Char (X_{\vec i})\Char(M )$
for each  $M_{\vec i}\subset V^m\tp M$, $m\geqslant 0$, at all $q$. That holds trivially for $m=0$.
 The direct sum decomposition $V\tp M_{\vec i}\simeq\op_{\vec j\in I(\vec i)} M_{\vec j}$
implies
\be
\Char(V) \Char(M_{\vec i})&=&\sum_{\vec j\in I(\vec i)}\Char (M_{\vec j})
    \leqslant\sum_{\vec j\in \tilde I(\vec i)}\Char (\tilde M_{\vec j})\leqslant
   \nn\\
    &\leqslant& \sum_{\vec j\in \tilde I(\vec i)}\Char(X_{\vec j})\Char(M)=\Char(V)\Char(X_{\vec i})\Char(M)
\label{car-eq}
\ee
for generic $q$,
because $\op_{\vec j\in \tilde I(\vec i)} X_{\vec j}=V\tp X_{\vec i}$.
We conclude that the inequalities in (\ref{car-eq}) are all equalities (for generic $q$), and, secondly,
$I(\vec i)=\tilde I(\vec i)$. In particular,  for each $\vec j$ and  each weight $\mu$ we have
\be
\dim M_{\vec j}[\mu] =\dim \tilde M_{\vec j}[\mu]=\dim (X_{\vec j}\tp  M)[\mu]
\label{eq_char}
\ee
at all $q$ in a punctured neighbourhood of $1$ (that might depend on
$\vec i$ and $\mu$). Then
$\Char (M_{\vec j}) =\Char(X_{\vec j})\Char(M)$
at all $q$   as $M_{\vec j}$  is simultaneously a
quotient of a Verma module and is a submodule in $V^{\tp (m+1)}\tp M$,
which are both flat at all $q$ including $q=1$.
Induction on $m$ proves 1) for all    $M_{\vec j}$.

To prove 2), we use the equality $\tilde I(\vec i)= I(\vec i)$ we have already established. That is, for each weight $\eta$ of a singular vector in
the $\k$-module $V\tp X_{\vec i}$ the pseudo-parabolic module of highest weights $\la+\eta$ does appear
in $V\tp M_{\vec i}$ (uniquely since all weights in $V$ are multiplicity free). Again induction on $m$ such that $V^{\tp m}\tp M\supset M_{\vec i}$
along with Proposition \ref{all Verma in PPO} secures 2).
 \end{proof}
 \begin{corollary}
 \label{multiplicities}
  For every $V\in \Fin_q(\g)$ and for all $\vec i, \vec j\in \Z_+^3$,
  there is an  isomorphism
  $$\Hom_{U_q(\g)}(M_{\vec j},V\tp M_{\vec i})\simeq\Hom_\k(X_{\vec j},V\tp X_{\vec i}).$$
\end{corollary}
\begin{proof}
The equality $\Char (V\tp M_{\vec i})=\sum_{\vec j \in I}\Char (M_{\vec j})$, where the summation is over an irreducible decomposition of $V\tp M_{\vec i}$,
implies $\Char (V\tp X_{\vec i})=\sum_{\vec j \in I }^{}\Char (X_{\vec j})$, thanks to Proposition \ref{pseudo-parabolic-irreducible}. Therefore  the $\k$-module $\op_{\vec j\in I}X_{\vec j}$ is isomorphic to $V\tp X_{\vec i}$
and the assertion follows.
\end{proof}

Now we summarise the main result of the paper.
\begin{thm}
\label{main}
  \begin{enumerate}
    \item   $\O_t(\Hbb P^2)$ is semi-simple for all $q$.
    \item For all $q$, $\O_t(\Hbb P^2)$  is equivalent to the category $\mathrm{Fin}(\k)$.
    \item Simple objects in $\O_t(\Hbb P^2)$ are exactly pseudo-parabolic Verma modules, for almost all $q$.
  \end{enumerate}
\end{thm}
\begin{proof}

  The category $\O_t(\Hbb P^2)$ is clearly additive.
  To prove the first   statement, observe that a module $V$ from $\Fin_q(\g)$ can be realized as a submodule
  in a tensor power of $\C^{6}$. Then apply Propositions \ref{all Verma in PPO} and \ref{VMi-comp-red}.

  Equivalence $\O_t(\Hbb P^2)\sim \mathrm{Fin}(\k)$ as Abelian categories can be proved similarly to \cite{M4}, Proposition 3.8
  (cf. also Corollary \ref{multiplicities} above).

  We know from Propositions \ref{VMi-comp-red} and \ref{pseudo-parabolic-irreducible} that simple objects of $\O_t(\Hbb P^2)$
  are exactly $M_{\vec i}$, $\vec i \in \Z_+^3$.
  Let us prove that for $M_{\vec i}\simeq \tilde M_{\vec i}$ for all but a finite number of values of $q$.

  Indeed,  a module of highest weight is irreducible if and only if its contravariant form is non-degenerate or, alternatively,
it has no singular vectors. Weights of singular vectors may be only  in  the orbit of the highest weight under the shifted action of the Weyl group.
Let $\tilde W\subset \tilde M_{\vec i}$ and  $W\subset M_{\vec i}$ denote the sums of weight spaces whose weights are in that orbit.
It is sufficient to check non-degeneracy of the form only on  $\tilde W$.
Since $\tilde W$ is finite dimensional, there is an alternative: either the form is degenerate for all $q$ or
or it is not at some and therefore almost all $q$. From (\ref{eq_char}) we see that  $\tilde W\simeq W$   in an open
neighbourhood of $1$. Therefore the form is non-degenerate  on $\tilde W$ and hence on $\tilde M_{\vec i}$ for almost all $q$
as required.
 \end{proof}
Note that the set of exceptional $q$ where $M_{\vec i}\not \simeq \tilde M_{\vec i}$ may depend on a module.
We nevertheless conjecture that it is empty for all $\vec i$, as  is the case for the base module.

\section{The algebra $\C_q[\Hbb P^2]$ and Reflection Equation}
\label{SecRE}
In this section we give a more detailed description of the quantized polynomial ring $\Ac=\C_q[\Hbb P^2]$ and its one-dimensional representation.
This is a special case of a general construction, and the reader is referred  to   \cite{M2,M6}  for details.

Let $\pi$  be the representation homomorphisms of $U_q(\g)$ to $\End(V)$, $V\simeq \C^6$.
Pick up a basis $\{v_i\}_{i=1}^6\subset V$
 as in Section \ref{SecETEP}. Let $\nu_i$ denote the weight of $v_i$, then
  $\nu_{i}=-\nu_{i'}$, where $i'=7-i$. Denote $\varsigma_i=1$ and $\varsigma_{i'}=-1$ for $i=1,2,3$.

Let $\Ru$ be a universal  R-matrix of $U_q(\g)$ such that $(\pi\tp \id)(\Ru)\in \End(\C^6)\tp U_q(\b_+)$ and set $\Q=\Ru_{21}\Ru$.
It commutes with the coproduct of every element in $U_q(\g)$.
Denote by $P$ the  flip of the tensor factors in $\C^6\tp \C^6$ and fix a $U_q(\g)$-invariant
braid matrix  $S\in \End(\C^6)\tp \End(\C^6)$. Note that $R=PS$ needs not to be image of the particular $\Ru$ entering $\Q$:
e.g. one can take $R=(\pi\tp \pi)(\Ru_{21}^{-1})$. One can choose $\pi$ and $R$ as in \cite{FRT}.

It is known that $\C_q[G]$ can be realized as  the locally finite part of the adjoint $U_q(\g)$-module. It is a subalgebra in $U_q(\g)$ generated
by entries of the matrix $(\pi\tp \id)(\Q)$.
The  image of $\C_q[G]$ in $\End(M)$ is a flat deformation of a quotient of $\C[G]$ by the defining ideal of $\Hbb P^2$. That
is a maximal proper invariant ideal in $\C[G]$, whence the image is a quantization of $\C[\Hbb P^2]$,
see \cite{M2} for details.

Let $\varpi\propto \sum_{i,j=1}^{6} q^{\rho_i-\rho_j}\varsigma_i\varsigma_je_{i'j}e_{ij'}$ be the invariant projector onto the trivial one-dimensional submodule in $\C^6\tp \C^6$. Here   $\rho_i=(\rho,\nu_i)=-(\rho,\nu_{i'})$;  in particular,  $\rho_i={4-i}$ for $i=1,2,3$.

Let $\pi_{\vec i}$, $\vec i\in \Z_+^3$, denote the representation homomorphism $U_q(\g)\to \End(M_{\vec i})$. The operator  $(\pi\tp \pi_{\vec i})(\Q)$ has
eigenvalues
\be
\label{eigenvalues}
x_\nu=q^{2(\la+\xi+\rho,\nu)-2(\rho,\ve_1)}, \quad \nu\in \La(V^+_{\vec i}),
\ee
where $\la+\xi$ is the highest weight of $M_{\vec i}$.
In particular, the matrix $Q=(\pi\tp \pi_{\vec 0})(\Q)$ has
two eigenvalues
$
q^{2(\la+\rho,\ve_1)-2(\rho,\ve_1)}
$
and
$
q^{2(\la+\rho,\ve_3) -2(\rho,\ve_1) }
$
on $\C^6\tp M$
corresponding to irreducible submodules of highest weights $\ve_1+\la$ and $\ve_3+\la$.
The value of its q-trace $\Tr_q(Q)=\Tr\bigl(\pi(q^{2h_\rho})Q\bigr)$  on $M$ can be found by the formula $\Tr_q(Q)=\Tr\bigl(\pi(q^{2h_\rho+2h_\la})\bigr)$,
cf.  \cite{M1}.

The algebra  $\Ac$ is generated by the entries  $\{Q_{ij}\}_{i,j=1}^6$, which satisfy
$$
S_{12}Q_2 S_{12} Q_2 = Q_2 S_{12}Q_2 S_{12}, \quad Q_2 S_{12}Q_2\varpi_{12} =q^{-7}\varpi_{12} =\varpi_{12}Q_2 S_{12}Q_2,
$$
$$
(Q+q^{-2})(Q-q^{-4} )=0, \quad \Tr_q(Q)=-(q^4+q^{-4}).
$$
Equations of the first line are understood in $\End(\C^6)\tp \End(\C^6)\tp \End(M)$ and the subscripts label the $\End(\C^6)$-factors.
They are equations of $\C_q[G]$, a deformation of $\C[G]$ that is equivariant under the conjugation action of $G$ on itself.
The last two equations fix the quantized conjugacy class $\Hbb P^2$.
This is the full set of relations defining $\Ac$.

There is a one-dimensional representation $\chi\colon \Ac\to \C$,  $Q_{ij}\mapsto A_{ij}$, where
$$
A
=
-q^{-3}\left(
\begin{array}{cccccc}
 q-\bar q & 0 & 0 & 0 & 1 & 0 \\
 0 &q-\bar q  & 0 & 0 & 0 & -1 \\
 0 & 0 &    q & 0 & 0 & 0 \\
 0 & 0 &    0 & q & 0 & 0 \\
 1 & 0 &    0 & 0 & 0 & 0 \\
 0 & -1 &   0 & 0 & 0 & 0 \\
 \end{array}
\right).
$$
In the classical limit, the matrix $A$ goes over to a point $t'\in \Hbb P^2$ where the Poisson bracket vanishes.

The matrix $A$ defines an embedding  of $\Ac$ in  the restricted Hopf dual to $U_q(\g)$ that we denote by $\Tc$.
A description of the algebra $\Tc$ can be extracted from \cite{FRT}.
Let   $T=(T_{ij})_{i,j=1}^{6}$ denote its matrix of generators.
This matrix is invertible with $(T^{-1})_{ij}=\gm(T_{ij})$, where
$\gm$ is the antipode of $\Tc$.
One has two commuting left and right translation actions of $U_q(\g)$ on $\Tc$ expressed through the Hopf paring and the
comultiplication in $\Tc$ by
$$
h\tr a= a^{(1)}(h,a^{(2)}),\quad  a\tl h= (a^{(1)},h) a^{(2)},\quad  a\in \Tc, \quad h\in U_q(\g).
$$
They are compatible with multiplication on $\Tc$ making it a $U_q(\g)$-bimodule algebra.

The assignment $Q_{ij}\mapsto (T^{-1}A T)_{ij}$
defines an equivariant homomorphism  $\Ac\to \Tc$, where $\Tc$
is viewed as a $U_q(\g)$-module  under the left translation action. It is an embedding by similar deformation
arguments as with the case of $\Ac\subset \End(M)$.
The  character
$\chi$ factors through the composition $\Ac\to  \Tc\to \C$,
where the right arrow is the counit $\epsilon$.

The entries of the matrix
$$
\Kc=(\id\tp \pi)(\Ru_{12})A_2(\id\tp \pi)(\Ru_{21})\in   \U_q(\g)\tp \End(\C^{6})
$$ generate a left coideal subalgebra $U_q(\k')\subset \U_q(\g)$. It is a deformation of
$U(\k')$ with $\k'\simeq \k$ being the Lie algebra of the centralizer of $t'$.

One can check that  $a\tl b=\epsilon(b)a$ for all $b\in U_q(\k')$ and $a\in \Ac$.  We argue that $\Ac$ exhausts
all of the subalgebra of $U_q(\k')$-invariants, for generic $q$. Indeed, the latter is  $\cap_{i,j=1}^6 \ker \Kc'_{ij}$
where $\Kc'_{ij}=\Kc_{ij}-\eps(\Kc_{ij})\in \k' \mod (q-1)$. Restricted to every isotypic component  of
the Peter-Weyl decomposition of $\Tc$, the kernel cannot increase in deformation.

\section{Quantization of equivariant vector bundles on $\Hbb P^2$}

In this section, we will interpret $\O_t(\Hbb P^2)$
as a category of "representations" for  quantum vector bundles on $\Hbb P^2$.

In the classical algebraic geometry, global sections of vector bundles on a variety
are finitely generated projective modules over its coordinate ring. If
a group $G$ acts on the bundle coherently with the base, the vector
bundle is called equivariant.
Algebraically it means that $G$ acts on global functions by automorphisms, $G$ acts on global sections,
and the multiplication between functions and sections is equivariant.

In the case of homogeneous space $G/K$, a vector bundle $\Gamma(G/K,X)$ is characterized by
a finite dimensional $K$-module $X$ over the initial point. It can be realized
as the space of $K$-invariants in  $\C[G]\tp X$ under   right translations. The group $G$
acts on $\Gamma(G/K,X)\simeq (\C[G]\tp X)^K$ by left translations.

 For a reductive pair $G\supset K$, the Peter-Weyl decomposition
 $\C[G]=\sum_{[V]} V\tp V^*$ gives the isotypic component of an irreducible module $V$ in $\Gamma(G/K,X)$;
 it is $V\tp \Hom_K(X,V)$. This is the classical input that we are going to mimic in our approach to quantization.

We have already  argued that the base module $M$ supports a faithful representation of $\Ac$
as a subalgebra in the locally finite part $\End^\circ(M)$ of linear operators on $M$.
Similarly we claim that the locally finite part $\Hom^\circ(M,M_{\vec i})$ of the $U_q(\g)$-module of linear maps from $M$ to $M_{\vec i}$
is a quantization  of the vector bundle $\Gamma(\Hbb P^2,X_{\vec i})$ with fiber $X_{\vec i}$. Note that $\Hom^\circ(M,M_{\vec i})$ is a natural equivariant right
$\End^\circ(M)$-module via the composition of linear maps.
\begin{propn}
\label{deformation_bundls}
  As a $U_q(\g)$-module, $\Hom^\circ(M,M_{\vec i})$ is a deformation of $\Gamma(\Hbb P^2,X_{\vec i})$.
\end{propn}
\begin{proof}
Since $M$ and $M_{\vec i}$ are irreducible along with their dual modules of lowest weight, equivariant maps  from $V$ to $\Hom(M,M_{\vec i})$
are in bijection with equivariant maps from $\Hom(M^*_{\vec i},M^*)$ to $V^*$,
for every $V\in \Fin_q(\g)$.
We have a version of  Corollary \ref{multiplicities} for dual modules
and  we can write
$$
\Hom_{U_q(\g)}\bigl(\Hom(M^*_{\vec i},M^*),V^*\bigr)\simeq \Hom_{U_q(\g)}\bigl(M_{\vec i}^*,V^*\tp M^*\bigr)\simeq   \Hom_\k(X_{\vec i}^*,V^*)\simeq \Hom_\k(V,X_{\vec i}).
$$
The rightmost term is isomorphic to $\Hom_\k(X_{\vec i},V)$ as $V$ is completely reducible over $\k$.
Thus the isotypic component of $V$ in $\Hom^\circ(M,M_{\vec i})$ is a deformation of the isotypic component of its classical
counterpart in
$\Gamma(\Hbb P^2,X_{\vec i})$.
\end{proof}
In particular, setting $M_{\vec i}=M$ we conclude that $\End^\circ(M)$ has the same module structure as $\Ac$. This implies that, for $q\not =1$, the algebra $\Ac$ exhausts all
of $\End^\circ(M)$. We will give a recipe for   construction of $\Hom^\circ(M,M_{\vec i})$ in what follows.
 For each $V\in \Fin_q(\g)$ an invariant projector from $\End(V)\tp \End(M)$
is in   $\End(V)\tp \End^\circ(M)$ and therefore in $\End(V)\tp \Ac$.
Such projectors can be constructed with the help of the invariant element $\Q$.
 \begin{lemma}
\label{separation}
  For each $\vec i\in \Z_+^3$,  the operator $\Q$ separates irreducible components in $\C^6\tp M_{\vec i}$.
\end{lemma}
\begin{proof}
Let $\la+\xi$,  $\xi\in \La^+_\k$, be the highest weight of $M_{\vec i}$.
We will calculate the ratio of eigenvalues $x_\mu x_\nu^{-1}$ for  $\mu\not=\nu$ using the formula (\ref{eigenvalues}).
By definition of the base weight, we find
$$
x_\mu x_\nu^{-1}=q^{2(\la+\xi+\rho,\mu-\nu)}=
\al(t)q^{2(\kappa+\xi,\al)}, \quad \al=\mu-\nu\in \Rm_\g.
$$
Here we used the fact that all non-zero weight differences in $\C^6$ are roots.
The right-hand side cannot turn $1$ if $\al\in \Rm_\g\backslash \Rm_\k$, because it has the form $-q^\Z$, and $q$ is not a root of unity.
On the other hand, if  $\al\in \Rm_\k^+$, then
$
x_\mu x_\nu^{-1}= q^{2(\kappa+\xi,\al)}\not =1
$
either
because
$
(\kappa,\al) >0
$
and
$
(\xi,\al) \geqslant 0.
$
\end{proof}
It turns out that the matrix $Q$ together with intertwiners from $\Fin_q(\g)$ are enough to get all morphisms in $\O_t(\Hbb P^2)$.
The braid matrix $S$ from the previous section produces a family of $U_q(\g)$-invariant operators on the tensor algebra $T(V)$ of the module $V=\C^6$
in the standard way, see e.g. \cite{FRT}.
Denote by  $\mathcal{I}$ the algebra of invariant operators on $T(V)\tp M$ generated by the matrix $Q\in \End(V\tp M)$ and all invariant operators on $T(V)$.

\begin{propn}
   $\Ic$ exhausts all of the algebra of invariant operators on $T(V)\tp M$.
  \label{idempotents}
\end{propn}
\begin{proof}
We need to show that $\Ic$ separates
  submodules in $V^{\tp m}\tp M$ for all $m\geqslant 0$. We do it by induction on $m$.

  The assertion is true for $m=0$ because $M$ is irreducible.
  Suppose that is done for some $m\geqslant 1$ and pick up $M_{\vec i}\subset V^{\tp m}\tp M$ with the representation $\pi_{\vec i}\colon U_q(\g) \to \End(M_{\vec i })$.
  Choose an invariant projector  $P_{\vec i}\colon V^{\tp m}\tp M\to M_{\vec i}$. By induction assumption, $P_{\vec i}$ belongs to $\Ic$.

  Observe that the image of the operator $(\id \tp \Delta^ m)(\Q)$ in $\End(V^{\tp m})\tp \End(M)$ belongs to $\Ic$ for all $m$.
  This readily follows from the identity $(\id \tp \Delta)(\Q)=\Ru^{-1}_{12}\Q_{13}\Ru_{12} \Q_{23}$,
  which reduces $(\id \tp \Delta^m)(\Q)$ to a product of $S$-and $Q$-matrices, \cite{DKM}.
 Therefore the operator
$$
 \bigl(\pi^{\tp ( m+1)}\tp \pi_{\vec 0}\bigr)(\id \tp \Delta^{m+1})(\Q)\times (\id \tp P_{\vec i}) \in \Ic
$$
separates irreducible submodules in $V\tp M_{\vec i}$, by
Lemma \ref{separation}. This is true for each summand
in the decomposition $V\tp V^{\tp m}\tp M=\op_{\vec i} V\tp M_{\vec i}$.
Induction on $m$ is completed.
\end{proof}
By construction, $\Ic$ is a subalgebra in $T\bigl(\End(V)\bigr)\tp \Ac$. Applying $\chi$ to the right factor one obtains
a subalgebra $\Ic_A$ of $U_q(\k')$-invariant operators in $T(V)$. It is generated by the matrix $A$ over the subalgebra of $U_q(\g)$-invariant operators
on $T(V)$, cf. \cite{M6}, Proposition 4.5. It follows from Proposition \ref{idempotents} above that $\Ic_A$ is exactly the  commutant of $U_q(\k')$ in $T\bigl(\End(V)\bigr)$.

In the remaining part of the section we prove equivalence of  $\O_t(\Hbb P^2)$ and  the category $\mathrm{Pr}_q(\Ac,\g)$ of  equivariant finitely  generated projective $\Ac$-modules.
All morphisms in  $\mathrm{Pr}_q(\Ac,\g)$ are equivariant. Objects are direct summands in $\Ac$-modules freely generated by
$U_q(\g)$-modules from $\Fin_q(\g)$.
\begin{lemma}
\label{evaluation_hom}
  For every $N\in \O_t(\Hbb P^2)$, the evaluation map $\Hom^\circ(M,N)\tp M\to N$, $\phi\tp m\mapsto \phi(m)$
  factors through an isomorphism $\Hom^\circ(M,N)\tp_\Ac M\to N$.
\end{lemma}
\begin{proof}
  First suppose that $N\not =\{0\}$ is irreducible. As the map is equivariant, its image is a submodule in $M$ and hence coincides with $M$ because
  $\Hom^\circ(M,N)\not =\{0\}$, by Proposition \ref{deformation_bundls}.
  In general, $N$ is a direct sum of irreducibles, $N=\op_iN_i$. Then $\Hom^\circ(M,N)=\op_i\Hom^\circ(M,N_i)$, and
  the assertion follows.
\end{proof}
Every module $N$ from  $\O_t(\Hbb P^2)$ is a direct summand in $V\tp M$ for some $V\in  \Fin_q(\g)$, therefore
$\Hom^\circ(M,N)$ is a direct summand in a free equivariant $\Ac$-module $\Hom^\circ(M,V\tp M)$.
The assignment $N\mapsto \Hom^\circ(M,N)$
 is a covariant functor from $\O_t(\Hbb P^2)$ to the category $\mathrm{Pr}_q(\Ac,\g)$, which we denote by $\mathfrak{H}$.
 It is obviously additive and respects
 tensor multiplication by modules from  $\Fin_q(\g)$.

 \begin{propn}
 The  functor
 $\mathfrak{T}\colon \Gamma \mapsto \Gamma\tp _\Ac M$ is the left inverse to $\mathfrak{H}$.
 \end{propn}
\begin{proof}
  Let $N$ be a module from $\O_t(\Hbb P^2)$ and $P$ be an invariant projector $V\tp M\to N$ for some $V\in \Fin_q(\g)$.
  As we commented after Proposition \ref{deformation_bundls}, $P\in \End(V)\tp \Ac$.
  Then $\Hom^\circ(M,N)$ is isomorphic to $P(V\tp \Ac)$ and $P(V\tp \Ac)\tp_\Ac M=N$ because $\Ac M=M$.

  If $f\colon N_1\to N_2$ is a $U_q(\g)$-homomorphism and $\phi\in \Hom^\circ(M,N_1)$, then $\mathfrak{H}(f)(\phi)=f\circ \phi$ is a map from $\Hom^\circ(M,N_2)$.
We get $\bigl(\mathfrak{H}(f)(\phi)\bigl)(m)=(f\circ \phi)(m)=f\bigl(\phi(m)\bigr)$ for all $m\in M$.
  Applying Lemma \ref{evaluation_hom} we arrive at   $(\mathfrak{T}\circ\mathfrak{H})(f)=f$.
\end{proof}
 The functor $\mathfrak{H}$ is surjective on objects up to an isomorphism. If $V\in \Fin_q(\g)$ and
$P(V\tp \Ac)$ is an  $\Ac$-module from $\mathrm{Pr}_q(\Ac,\g)$ determined by an invariant projector $P\in \End(V)\tp \Ac$, then $P(V\tp \Ac)$ is  isomorphic to $\Hom^\circ(M,N)$ with  $N=P(V\tp M)\in \O_t(\Hbb P^2)$ because $\Ac\simeq \End^\circ(M)$.
\begin{thm}
The  $\Fin_q(\g)$-module categories $\O_t(\Hbb P^2)$  and $\mathrm{Pr}_q(\Ac,\g)$ are equivalent.
\end{thm}
\begin{proof}
 We have seen that $\mathfrak{H}$ is surjective on objects and injective on morphisms.
We are left to check that it is surjective on morphisms as well.

Suppose that $G\colon \Gamma_1\to \Gamma_2$ is a morphism in $\Pr_q(\Ac,\g)$.
We can assume that $\Gamma_i=\mathfrak{H}(N_i)$ for some  $N_i\in \O_t(\Hbb P^2)$, $i=1,2$.
Denote by $\jmath_i\colon N_i\to V_i\tp M$ and by $\wp_i\colon V_i\tp M\to N_i$ their embeddings and projections, respectively, such that $\wp_i\circ \jmath_i=\id_{N_i}$.
They give rise to embeddings and projections $\mathfrak{H}(\jmath_i)\colon \Gamma_i\to V_i\tp \Ac$ and
$\mathfrak{H}(\wp_i)\colon  V_i\tp \Ac\to \Gamma_i$, satisfying $\mathfrak{H}(\wp_i)\circ \mathfrak{H}(\imath_i)=\id_{\Gamma_i}$, $i=1,2$.

Consider  a morphism  $F=\mathfrak{H} (\jmath_2)\circ G\circ \mathfrak{H} (\wp_1)$ from $V_1\tp \Ac$ to $V_2\tp \Ac$. It implies that
 $G=\mathfrak{H} (\wp_2)\circ F\circ \mathfrak{H} (\jmath_1)$.
An equivariant map $F\colon (V_1\tp 1_\Ac)\to V_2\tp \Ac$  gives rise to an equivariant map $f\in V_1\tp M\to V_2\tp M$
because $\Ac\simeq \End^\circ(M)$. Then $F=\mathfrak{H}(f)$, and
 $G=\mathfrak{H} (\wp_2\circ f\circ  \jmath_1)$, hence $\mathfrak{H}$ is bijective on morphisms.
This completes the proof.
\end{proof}
Note that the category of general projective $\Ac$-modules is not semi-simple as a quotient of two projectives is not necessarily so.
The case of  equivariant projective modules is different.

The presence of a one-dimensional representation $\chi\colon \Ac\to \C$ from the previous section enables a realization of $\Pr_q(\Ac,\g)$ via quantized functions on
the group $G$. This construction is a deformation of the classical realization of an associated vector bundle.
Define $\Fin_q(\k')$ as the category of modules that are submodules of modules from $\Fin_q(\g)$. It is a $\Fin_q(\g)$-module category as $U_q(\k')$ is
a coideal subalgebra in $U_q(\g)$.

Given $X\in \Fin_q(\k')$ define the associated bundle with fiber $X$ as the subspace of $U_q(\k')$-invariants in $\Tc\tp X$.
It is in $\Pr_q(\Ac,\g)$ because for all $V\in \Fin_q(\g)$ there is a natural bijection between $U_q(\g)$-invariant idempotents in $\End(V)\tp \Ac$ and $U_q(\k')$-invariant projectors on $V$, cf. \cite{M6}.
The inverse functor acts by $\Gamma\mapsto \Gamma\tp _\Ac \C$ for $\Gamma \in \Pr_q(\Ac,\g)$.
This yields an equivalence between $\Fin_q(\k')$ and $\Pr_q(\Ac,\g)\sim\O_t(\Hbb P^2)$,
which obviously respects the action of $\Fin_q(\g)$.

\appendix

\section{Appendix}
In this technical section, we derive some identities in the algebra $U_q(\g_-)$ which
are needed for this exposition.

\begin{lemma}
  Define $\bar f_\theta$ obtained from $f_\theta$ by replacement $q\to \bar q$. Then
\be
f_\theta&=&[f_2,[[f_{1},f_2]_{\bar q},f_{3}]_{\bar q^2}]_q=\bar q[[f_{1},f_2]_{\bar q},[f_2,f_{3}]_{q^2}]_{\bar q},
\label{Ap-f-theta}
\\
\Bar f_\theta&=&[f_2,[[f_{1},f_2]_{q},f_{3}]_{q^2}]_{\bar q}=q[[f_{1},f_2]_{q},[f_2,f_{3}]_{\bar q^2}]_{q}.
\label{Ap-bar-f-theta}
\ee
\end{lemma}
\begin{proof}
We will use a modified Jacobi identity
\be
[x,[y,z]_a]_b=[[x,y]_c,z]_{\frac{ab}{c}}+c[y,[x,z]_{\frac{b}{c}}]_{\frac{a}{c}},
\label{Jacobi}
\ee
which holds true for any elements $x,y,z$ of an associative algebra and any scalars $a,b,c$ with invertible $c$.
This can be verified by a direct calculation.

Now let us prove the right equality in  (\ref{Ap-f-theta}). Apply (\ref{Jacobi}) to $[f_2,[[f_{1},f_2]_{\bar q},f_{3}]_{\bar q^2}]_q$ choosing $c=\bar q$:
\be
[f_2,[[f_{1},f_2]_{\bar q},f_{3}]_{\bar q^2}]_q=[[f_2,[f_{1},f_2]_{\bar q}]_{\bar q},f_{3}]+\bar q[[f_{1},f_2]_{\bar q},[f_2,f_{3}]_{ q^2}]_{\bar q}.
\nn
\ee
The first summand vanishes thanks to the Serre relation of weight $-(2\al_2+\al_{1})$ whence (\ref{Ap-f-theta}) follows.
Then (\ref{Ap-bar-f-theta}) follows from (\ref{Ap-f-theta}) by replacement $q\to \bar q$.
\end{proof}

\begin{lemma}
One has
\be
qf_\theta+\bar q\bar f_\theta=[f_1,f_\dt]\in J,
\label{f_theta-bar_f_theta}
\ee
\end{lemma}
\begin{proof}
Apply  (\ref{Jacobi}) to $[f_1,f_\dt]=[f_1,[f_2,[f_2,f_3]_{q^2}]_{\bar q^2}]$  choosing $c=\bar q$. Then
$$
[f_1,f_\dt]=[[f_1,f_2]_{\bar q},[f_2,f_3]_{q^2}]_{\bar q}+\bar q[f_2,[f_1,[f_2,f_3]_{q^2}]_{q}]_{\bar q}.
$$
The first summand is $qf_\theta$ from (\ref{Ap-f-theta}). In the second summand, replace $[f_1,[f_2,f_3]_{q^2}]_{q}$ with $[[f_1,f_2]_{q},f_3]_{q^2}$,
then it becomes $\bar q \bar f_\theta$ from (\ref{f_theta-bar_f_theta}).
\end{proof}
Other identities of interest can be also derived from the Serre relations a with the use of the modified Jacobi
identity (\ref{Jacobi}).
We will give another proof based on Lusztig's braid group automorphisms of $U_q(\g)$, \cite{ChP}.
\begin{propn}
  The following relations hold true in $U_q(\g_-)$:
\be
&[f_3,f_\theta]=0=[f_3,\Bar f_\theta],
\label{Ap-theta-beta}\\
&f_2 f_\theta=\bar q f_\theta f_2, \quad f_2\Bar f_\theta=q\Bar f_\theta f_2,
  \label{Ap-Serre_step}
\\
&f_\dt f_\theta= \bar q^2 f_\theta f_\dt,
\label{Ap-theta delta}
\\
&    f_\nu f_\theta=qf_\theta f_\nu,
\label{Ap-nu-theta}
\\
&f_\xi f_\theta =qf_\theta f_\xi,
\label {Ap-xi-theta}
\ee
where $f_\nu=[f_1,f_2]_{\bar q}$.
\end{propn}
\begin{proof}
Let  $T_i$ be  Lusztig automorphisms  of $U_q(\g)$ corresponding to
  simple reflections $\si_i\colon \Rm\to \Rm$ relative the simple roots $\al_i$, as in \cite{ChP}.
  They satisfy braid group relations, of which we will need only
$$
T_2 T_3 T_2 T_3=T_3 T_2 T_3 T_2.
$$
In particular, $f_\nu=T^{-1}_2(f_1)$ and $T^{-1}_3(f_2)=[f_2,f_3]_{\bar q^2}$ which implies
$$
T^{-1}_3T^{-1}_2T^{-1}_3(f_1)=T^{-1}_3(f_\nu)=f_\xi,$$ because $T^{-1}_3(f_1)=f_1$.
  Set $w=T^{-1}_3T^{-1}_2T^{-1}_3$, then
$$
w(f_1)\propto f_\xi,
\quad
w(f_2)=f_2,
\quad
w T^{-1}_2(f_1)\propto w(f_\nu)\propto f_\theta,
\quad
wT^{-1}_2(e_3)\propto q^{-h_3}f_3.
$$
The first equality has been checked. The second equality is fulfilled because $\si_3\si_2\si_3(\al_2)=\al_2$. The third formula follows from the first two
as $w$ is an algebra automorphism. The  last one readily follows from the equality $T_2^{-1} T_3^{-1} T_2^{-1}(e_3)=e_3$
as a result of $T^{-1}_3(e_3)$, cf.\cite{ChP}.

Applying $wT^{-1}_2$ to a commuting pair $(e_3,f_1)$ one gets the left equality (\ref{Ap-theta-beta}) because $(\theta,\al_3)=0$.
Applying $w$ to a quasi-commuting pair $(f_2,f_\nu)$, one gets the left equality in (\ref{Ap-Serre_step}). The right
equalities in  (\ref{Ap-theta-beta}) and (\ref{Ap-Serre_step}) result from  replacement $q\to q^{-1}$. Then
(\ref{Ap-Serre_step}) follows since $f_\dt$ comprises two $f_2$-factors and one $f_3$-factor.
To prove (\ref{Ap-nu-theta}), apply $T^{-1}_2T^{-1}_3$ to a quasi-commuting pair of $f_1$
and $f_\nu\simeq T^{-1}_2 T^{-1}_3(f_1)$, using the equality $T^{-1}_3 (f_1)=f_1$ and the braid relation.
The formula (\ref{Ap-xi-theta}) is obtained by applying $w$ to quasi-commuting $f_2$ and $f_\nu$.
\end{proof}

\vspace{10pt}
\noindent
\underline{\large \bf Acknowledgement.}

\vspace{10pt}
\noindent
The second author (AM) is grateful to Steklov Mathematical Institute, St.-Petersburg Department,
where the revised version of the original manuscript was done,
for partial support of this work.
This  research   was  supported by a grant for creation
and development of International Mathematical Centers, agreement no.
075-15-2019-1620 of November 8, 2019, between Ministry of Science and
Higher  Education of Russia and PDMI RAS.

The authors are indebted  to the anonymous referee  whose careful reading of the manuscript and
valuable comments and suggestions greatly helped us to improve and extend the original text.

Data sharing not applicable to this article as no datasets were generated or analysed during the current study.
\end{document}